\spnewtheorem{defn}[theorem]{Definition}{\bfseries}{\rmfamily}
\spnewtheorem{prob}[theorem]{Open Problem}{\bfseries}{\rmfamily}
\def\dem#1{\fbox{$#1$}}
\def\val#1#2#3#4{\mathrm{val}_{#1,#2,#3}(#4)}
\def\sat#1#2#3#4{#1,#2,#3 \models #4}
\def\M{\mathfrak{M}}
\let\lif\rightarrow
\let\liff\leftrightarrow
\let\seq\equiv
\def\eps{\ensuremath{\varepsilon}}
\def\mrm#1{\ensuremath{\mathrm{#1}}}
\def\card#1{\left|#1\right|}
\def\deg#1{\mathrm{deg}(#1)}
\def\rk#1{\mathrm{rk}(#1)}
\def\Lea{L_{\eps\forall}}
\def\Trm{\mrm{Trm}}
\def\Frm{\mrm{Frm}}
\def\Var{\mrm{Var}}
\def\Typ{\mrm{Typ}}
\def\FV{\mrm{FV}}
\def\meps#1#2{\ensuremath{\varepsilon_#1\,#2}}
\def\st#1#2#3{\ensuremath{#1[#2/#3]}}
\def\ST#1#2#3{\ensuremath{#1\{#2/#3\}}}
\def\ext{\mathrm{ext}}
\def\inst#1{#1^\textrm{inst}}
\def\EC{\mrm{EC}}
\def\ECq{\ensuremath{\mathrm{EC}^=}}
\def\PC{\ensuremath{\mathrm{EC}_\forall}}
\def\ECe{\ensuremath{\mathrm{EC}_\eps}}
\def\ECeq{\ensuremath{\mathrm{EC}_\eps^=}}
\def\ECex{\ensuremath{\mathrm{EC}_\eps^\ext}}
\def\PCe{\ensuremath{\mathrm{EC}_{\eps\forall}}}
\newcommand*{\proves}[2][{}]{\mathrel{\vdash^{#1}_{#2}}}
\newcommand*{\nproves}[1]{\mathrel{\not\vdash_{#1}}}
\begin{document}
\title{Semantics and Proof Theory of the\\ Epsilon Calculus}
\titlerunning{Semantics and Proof Theory of the Epsilon Calculus}

\author{\href{http://ucalgary.ca/rzach}{Richard Zach}\thanks{Research supported by the Natural Sciences and Engineering Research Council.}}
\institute{Department of Philosophy, University of Calgary, Canada\\
  \email{rzach@ucalgary.ca}}

\maketitle

\begin{abstract}
The epsilon operator is a term-forming operator which replaces
quantifiers in ordinary predicate logic. The application of this
undervalued formalism has been hampered by the absence of well-behaved
proof systems on the one hand, and accessible presentations of its
theory on the other. One significant early result for the original
axiomatic proof system for the \eps-calculus is the first epsilon
theorem, for which a proof is sketched.  The system itself is
discussed, also relative to possible semantic interpretations. The
problems facing the development of proof-theoretically well-behaved
systems are outlined.
\end{abstract}

\section{Introduction}

A formalism for logical choice operators has long been available in
the form of Hilbert's epsilon calculus.  The epsilon calculus is one
of the first formal systems of first-order predicate logic. It was
introduced in 1921 by David Hilbert \cite{Hilbert:22a}, who proposed
to use it for the formalization and proof theoretical investigation of
mathematical systems. In the epsilon calculus, a term-forming operator
\eps{} is used, the intuitive meaning of which is an indefinite
choice function: $\meps x {A(x)}$ is some $x$ which satisfies $A(x)$
if $A(x)$ is satisfied at all, and arbitrary otherwise.  Quantifiers
can then be defined, e.g., $(\exists x)A(x)$ as $A(\epsilon_x A(x))$.

The epsilon calculus and proof theoretic methods developed for it,
such as the so-called epsilon substitution method, have mainly been
applied to the proof theoretic analysis of mathematical systems of
arithmetic and analysis (especially in work by Ackermann, Mints,
Arai). (See \cite{AvigadZach:2002} for a survey of the epsilon
calculus and its history.) Despite its long history and manifold uses,
the epsilon calculus as a logical formalism in general is not
thoroughly understood, yet its potential for applications in logic and
other areas, especially linguistics and computer science, has by far
not been fully explored.

There are various options for definitions of semantics of the epsilon
operator. The choice of $\meps x {A(x)}$ may be extensional (i.e.,
depend only on the set of $x$ which satisfy $A(x)$; this definition
validates the so-called axiom of \eps-extensionality), it may be
intensional (i.e., depend also on $A(x)$ itself;
\eps-extensionality fails), and it may be completely
indeterministic (i.e., different occurrences of the same \eps-term
$\meps x {A(x)}$ may select different witnesses for $A(x)$).  The
first and third versions have been investigated by Blass and Gurevich
\cite{BlassGurevich:2000:JSL}. These different semantics result in
different expressive power (in particular, over finite models), and
are characterized by different formalizations. Below we present the
first two versions of the semantics of the \eps-calculus and sketch
completeness results.

The very beginnings of proof theory in the work of Hilbert and his
students consisted in the proof theoretic study of axiom systems for
the \eps-calculus. One of the most significant results in this
connection are the epsilon theorems.  It plays a role similar to
Gentzen's midsequent theorem in the proof theory of the sequent
calculus: it yields a version of Herbrand's Theorem. In fact, it was
used to give the first correct proof of Herbrand's theorem (Hilbert
and Bernays \cite{HB70}).  In a simple formulation, the theorem states
that if an existential formula $(\exists x)A(x)$ (not containing \eps)
is derivable in the epsilon calculus, then there are terms $t_1$,
\dots, $t_n$ so that a (Herbrand-) disjunction $A(t_1) \lor \ldots
\lor A(t_n)$ is derivable in propositional logic.  The proof gives a
constructive procedure that, given a derivation of $(\exists x)A(x)$,
produces the corresponding Herbrand disjunction. An analysis of this
proof (see \cite{MoserZach:06}) gives a hyper-exponential bound on the
length of the Herbrand disjunction in the number of critical formulas
occurring in the proof.  The bound is essentially optimal, since it is
known from work by Orevkov and Statman that the length of Herbrand
disjunctions is hyper-exponential in the length of proofs of the
original existential formula (this is the basis for familiar speed-up
theorems of systems with cut over cut-free systems). In
section~\ref{epsthm} we prove the first epsilon theorem with identity,
along the lines of Bernays's proof.

A general proof theory of the epsilon calculus requires formal systems
that are more amenable to proof-theoretic investigations than the
Hilbert-type axiomatic systems studied in the Hilbert school.
Although some sequent systems for the epsilon calculus exist, it is
not clear that they are the best possible formulations, nor have their
proof-theoretic properties been investigated in depth.
Maehara's~\cite{Maehara:55} and Leisenring's \cite{Leisenring:1969}
systems were not cut-free complete. Yasuhara \cite{Yasuhara:82}
studied a cut-free complete system, but only gave a semantic
cut-elimination proof.  Section~\ref{proofth} surveys these and other
systems, and highlights some of the difficulties in developing a
systematic proof theory on the basis of them. Proof-theoretically
suitable formalisms for the \eps-calculus are still a desideratum for
applications of the epsilon calculus.

The classical \eps-calculus is usually investigated as a
proof-theoretic formalism, and no systematic study of the model theory
of epsilon calculi other than Asser's classic \cite{Asser:1957}
exists.  However, Abiteboul and Vianu \cite{AbiteboulVianu:91}, Blass
and Gurevich \cite{BlassGurevich:2000:JSL}, and Otto \cite{Otto:00}
have studied the model theory of choice operators in the context of
finite model theory and database query languages. And applications of
choice operators to model definite and indefinite noun phrases in
computational linguistics Meyer Viol \cite{MeyerViol:95} and von
Heusinger \cite{Heusinger:00,Heusinger:04} have led to the definition
of indexed epsilon calculus by Mints and Sarenac
\cite{MintsSarenac:2003}.

With a view to applications, it is especially important to develop the
semantics and proof theory of epsilon operators in non-classical
logics.  Of particular importance in this context is the development
of epsilon calculi for intuitionistic logic, not least because this is
the context in which the epsilon calculus can and has been applied in
programming language semantics.  Some work has been done on
intuitionistic $\eps$-calculi (e.g., Bell \cite{Bell:93a}, DeVidi
\cite{DeVidi:95}, Meyer Viol \cite{MeyerViol:95}, Mints
\cite{Mints:77}), but there are still many important open questions.
The straightforward extensions of intuitionistic logic by epsilon
operators are not conservative and result in intermediate logics
related to G\"odel logic.  Meyer Viol \cite{MeyerViol:95} has proposed
a conservative extensions of intuitionistic logic by epsilon operators
which warrants further study.

\section{Syntax and Axiomatic Proof Systems}

\begin{defn}
  The language of of the elementary calculus~$L_\EC^=$ contains the
  usual logical symbols (variables, function and predicate
  symbols, $=$).  A subscript $\eps$ will indicate the presence of the
  symbol~\eps, and $\forall$ the presence of the quantifiers $\forall$
  and $\exists$.  The \emph{terms}~$\Trm$ and \emph{formulas}~$\Frm$
  of~$\Lea$ are defined as usual, but simultaneously, to include:
\begin{quote}
 If $A$ is a formula in which $x$ has a free occurrence but no bound
 occurrence, then $\meps x A$ is a term, and all occurrences of $x$ in
 it are bound.
\end{quote}
If $E$ is an expression (term or formula), then $\FV(E)$ is the set of
variables which have free occurrences in~$E$.
\end{defn}

When $E$, $E'$ are expressions (terms or formulas), we write $E \seq
E'$ iff $E$ and $E'$ are syntactically identical up to a renaming of
bound variables.  We say that a term $t$ is \emph{free for $x$ in $E$} iff
$x$ does not occur free in the scope of an \eps-operator $\meps y{}$ or
quantifier $\forall y$, $\exists y$ for any~$y \in \FV(t)$.

If $E$ is an expression and $t$ is a term, we write $\st E x t$ for
the result of substituting every free occurrence of~$x$ in~$E$ by~$t$,
provided $t$ is free for $x$ in $E$, and renaming bound variables
in~$t$ if necessary.  We write $E(x)$ to indicate that $x \in \FV(E)$,
and $E(t)$ for $\st E x t$.  We write $\ST E t u$ for the result of
replacing every occurrence of $t$ in $E$ by $u$.\footnote{Skipping
  details, (a)~we want to replace not just every occurrence of $t$ by
  $u$, but every occurrence of a term $t' \seq t$.  (b)~$t$ may have
  an occurrence in $E$ where a variable in $t$ is bound by a
  quantifier or \eps{} outside $t$, and such occurrences shouldn't be
  replaced (they are not subterm occurrences). (c)~When replacing $t$
  by $u$, bound variables in $u$ might have to be renamed to avoid
  conflicts with the bound variables in $E'$ and bound variables in
  $E'$ might have to be renamed to avoid free variables in $u$ being
  bound.}

\begin{defn}[\eps-Translation]
  If $E$ is an expression, define $E^\eps$ by:
  \begin{enumerate}
  \item $E^\eps = E$ if $E$ is a variable, a constant symbol,
    or~$\bot$.
  \item If $E = f^n_i(t_1, \dots, t_n)$, $E^\eps = f^n_i(t_1^\eps,
    \dots, t_n^\eps)$.
  \item If $E = P^n_i(t_1, \dots, t_n)$, $E^\eps = P^n_i(t_1^\eps,
    \dots, t_n^\eps)$.
  \item If $E = \lnot A$, then $E^\eps = \lnot A^\eps$.
  \item If $E = (A \land B)$, $(A \lor B)$, $(A \lif B)$, or $(A \liff
    B)$, then $E^\eps = (A^\eps \land B^\eps)$, $(A^\eps \lor
    B^\eps)$, $(A^\eps \lif B^\eps)$, or $(A^\eps \liff B^\eps)$,
    respectively.
  \item  If $E = \exists x\, A(x)$ or $\forall x\, A(x)$, then $E^\eps
    = A^\eps(\meps x{A(x)^\eps})$ or $A^\eps(\meps x {\lnot A(x)^\eps})$.
  \item If $E = \meps x {A(x)}$, then $E^\eps = \meps x{A(x)^\eps}$.
  \end{enumerate}
\end{defn}

\begin{defn}
  An \eps-term $p \seq \meps x {B(x; x_1, \dots, x_n)}$ is a
  \emph{type of an \eps-term~$\meps x {A(x)}$} iff
  \begin{enumerate}
  \item $p \seq \st{\st{\meps x {A(x)}}{x_1}{t_1}\dots}{x_n}{t_n}$
  for some terms $t_1$, \dots,~$t_n$.
  \item $\FV(p) = \{x_1, \dots, x_n\}$.
  \item $x_1$, \dots, $x_n$ are all immediate subterms of $p$.
  \item Each $x_i$ has exactly one occurrence in~$p$.
  \item The occurrence of $x_i$ is left of the occurrence of $x_j$ in
    $p$ if $i < j$.
  \end{enumerate}
  We denote the set of types as~\Typ.
\end{defn}

\begin{proposition}
  The type of an epsilon term~$\meps x {A(x)}$ is unique up to
  renaming of bound, and disjoint renaming of free variables.
\end{proposition}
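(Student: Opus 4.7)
The plan is to extract a canonical type from $\meps x{A(x)}$ by factoring out its maximal term occurrences, and then to show that every other type must agree with this canonical one up to the renamings allowed by $\seq$.

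First I would make precise the notion of a \emph{maximal term occurrence} in $A(x)$: an occurrence of a term that appears as an immediate argument of a predicate, equality, or logical symbol, and that lies strictly inside no other term (in particular, not strictly inside a function application, a quantified subformula, or another \eps-term). A left-to-right traversal of the parse tree of $A(x)$ produces a canonical finite sequence $s_1,\ldots,s_n$ of such occurrences. Choosing pairwise distinct fresh variables $x_1,\ldots,x_n$ outside $\FV(\meps x{A(x)}) \cup \bigcup_i \FV(s_i)$ and replacing the $i$-th occurrence $s_i$ by $x_i$ yields a term $p_0 \seq \meps x{B(x;x_1,\ldots,x_n)}$; with $t_i := s_i$, the five clauses defining a type are then all immediate from the construction.

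For uniqueness, suppose $p \seq \meps x{B(x;x_1,\ldots,x_n)}$ and $p' \seq \meps x{B'(x;x'_1,\ldots,x'_m)}$ are two types. Conditions (3)--(5) force each $x_i$ to sit at a maximal term position of $p$, exactly once, in left-to-right order, and similarly for the $x'_j$ in $p'$. Condition (1) says that plugging $t_i$ in for $x_i$ recovers $\meps x{A(x)}$; since substitution at a maximal term position of $p$ yields a maximal term position of $\meps x{A(x)}$, the $x_i$'s correspond, in order, to exactly the intrinsically defined sequence $s_1,\ldots,s_n$, and the same holds for the $x'_j$'s. Hence $m=n$, and $p'$ is obtained from $p$ by the disjoint renaming $x_i \mapsto x'_i$ together with the $\alpha$-renaming of bound variables inside the $t_i$, all of which is absorbed into $\seq$.

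The main technical care needed will lie in making "maximal term occurrence" rigorous and showing that it behaves well under substitution: maximal positions of $p$ must project bijectively, in left-to-right order, to maximal positions of $\meps x{A(x)}$ after the $t_i$'s are plugged in, and no maximal position of $\meps x{A(x)}$ may be left unmarked by some $x_i$ (otherwise condition~(1) could not recover $\meps x{A(x)}$ faithfully from $p$). Once this bookkeeping is in place, the uniqueness reduces to a straightforward counting argument along the left-to-right ordering of positions.
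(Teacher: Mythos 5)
Your overall strategy---replace the maximal subterm occurrences of $\meps x{A(x)}$ by fresh variables in left-to-right order to get a canonical type, then show that clauses (2)--(5) force any other type onto that canonical form up to the permitted renamings---is the natural one (the paper states this proposition without proof, so there is nothing to compare against). But the notion everything rests on, your ``maximal term occurrence,'' is defined incorrectly, and it goes wrong on exactly the point the paper flags in its footnotes on $\ST E t u$ and on subordinate \eps-terms: an occurrence of a term in $\meps x{A(x)}$ one of whose variables is bound from outside that occurrence (paradigmatically, an occurrence containing the bound variable $x$) is \emph{not} a subterm occurrence and cannot be abstracted into a parameter. Your definition ignores capture entirely. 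Already for $\meps x{P(f(x))}$ your construction abstracts the maximal occurrence $f(x)$ and yields $p_0 = \meps x{P(x_1)}$ with $t_1 = f(x)$; but $P(x_1)$ has no free occurrence of $x$, so $p_0$ is not even a term, and $f(x)$ is not free for $x_1$ in it, so clause (1) cannot hold. The correct type here is $\meps x{P(f(x))}$ itself with $n=0$, and for $\meps x{P(f(x,c))}$ it is $\meps x{P(f(x,x_1))}$ with $t_1 = c$: ``immediate subterm'' must mean \emph{maximal among the capture-free term occurrences}, and the canonical construction must descend recursively through the non-abstractable terms rather than stop at the topmost term node. Your parenthetical gloss (``not strictly inside a quantified subformula or another \eps-term'') compounds the problem by also excluding legitimate immediate subterms such as the occurrence of $c$ in a subformula $\forall y\, Q(y,c)$ of $A(x)$.

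The uniqueness half inherits the same gap. You identify the positions of the $x_i$ in a type $p$ with the ``maximal term positions'' of $\meps x{A(x)}$, but the $x_i$ can only occupy the capture-free maximal positions, and you need the (easy but missing) complementary argument that $p$ is forced to reproduce $\meps x{A(x)}$ literally above and inside every non-abstractable occurrence: if a maximal occurrence $s$ contains an externally bound variable, no $x_i$ can sit there (else $t_i \seq s$ would not be free for $x_i$, so the substitution in clause (1) is undefined), hence $p$ carries the outermost symbol of $s$ at that position and one recurses into its arguments. Note also that your claim that ``no maximal position may be left unmarked by some $x_i$, otherwise (1) could not recover $\meps x{A(x)}$'' is not right as stated: $\meps x{P(x,c,x_1)}$ recovers $\meps x{P(x,c,d)}$ perfectly well under (1)--(2) and (4)--(5). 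What rules it out is clause (3), read as saying that the $x_i$ \emph{exhaust} the immediate subterms of $p$; without that reading the proposition itself is false. With ``immediate subterm'' repaired, the recursive descent made explicit, and clause (3) used in this way, your left-to-right counting argument does go through.
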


\begin{defn}
  An \eps-term $e$ is \emph{nested in} an \eps-term $e'$ if $e$ is a
  proper subterm of~$e$.
\end{defn}

\begin{defn}
  The \emph{degree~$\deg e$} of an \eps-term~$e$ is defined as
  follows: (1) $\deg e = 1$ iff $e$ contains no nested \eps-terms.
  (2) $\deg e = \max\{\deg {e_1}, \dots, \deg{e_n}\} + 1$ if $e_1$,
  \dots,~$e_n$ are all the \eps-terms nested in~$e$.  For convenience,
  let $\deg{t} = 0$ if $t$ is not an \eps-term.
\end{defn}

\begin{defn}
  An \eps-term $e$ is \emph{subordinate to} an \eps-term $e' = \meps x
  A(x)$ if some $e'' \seq e$ occurs in $e'$ and $x \in \FV(e'')$.
\end{defn}

Note that if $e$ is subordinate to $e'$ it is \emph{not} a subterm of
$e'$, because $x$ is free in $e$ and so the occurrence of $e$ (really,
of the variant $e''$) in $e'$ is in the scope of~$\eps_x$.\footnote{One might
think that replacing $e$ in $\meps x{A(x)}$ by a new variable $y$
would result in an \eps-term $\meps x{A'(y)}$ so that $e' \equiv \st
{\meps x{A'(y)}}{y}{e}$. But (a) $\meps x{A'(y)}$ is not in general a
term, since it is not guaranteed that $x$ is free in $A'(y)$ and (b)
$e$ is not free for $y$ in $\meps x {A'(y)}$.}

\begin{defn}
  The \emph{rank~$\rk e$} of an \eps-term~$e$ is defined as follows:
  (1) $\rk e = 1$ iff $e$ contains no subordinate \eps-terms. (2) $\rk
  e = \max\{\rk {e_1}, \dots, \rk{e_n}\} + 1$ if $e_1$, \dots,~$e_n$
  are all the \eps-terms subordinate to~$e$.
\end{defn}

\begin{proposition}
  If $p$ is the type of $e$, then $\rk{p} = \rk{e}$.
\end{proposition}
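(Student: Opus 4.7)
The plan is to prove a stronger invariant: for any $\epsilon$-term $q$ and any composition of substitutions $\sigma = [x_1/t_1]\cdots[x_n/t_n]$ in which each $t_i$ is free for $x_i$ in $q$, the rank is preserved, $\rk{q}=\rk{q\sigma}$ (where I write $q\sigma$ as shorthand for $\st{\cdots\st{q}{x_1}{t_1}\cdots}{x_n}{t_n}$). The proposition then follows by taking $q\seq p$, since clause~(1) of the definition of type says precisely that $e\seq p\sigma$ for such an admissible $\sigma$.

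The engine of the argument is a rank-preserving bijection between the subordinate $\epsilon$-terms of $q$ and those of $q\sigma$. Writing $q\seq\meps y C$, every $\epsilon$-subterm of $q\sigma$ is either (a) of the form $s\sigma$ for some $\epsilon$-subterm $s$ of $q$ (its outer $\epsilon$ descending from $q$), or (b) lies entirely inside some substituted $t_i$ (its outer $\epsilon$ coming from $t_i$). The free-for condition forces $y\notin\FV(t_i)$ for every $i$, because $x_i$ occurs free in $q$ inside the scope of $\meps y$; so I expect terms of type~(b) not to have $y$ free, hence not to be subordinate to $q\sigma$, while for type~(a), $y\in\FV(s\sigma)$ exactly when $y\in\FV(s)$. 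This should yield the bijection $s\leftrightarrow s\sigma$ between subordinate $\epsilon$-terms.

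With the bijection in hand, I would finish by induction on $\rk{q}$. The base case $\rk{q}=1$ is immediate. In the induction step, if $s_1,\dots,s_m$ are the subordinate $\epsilon$-terms of $q$, I would first verify that $\sigma$ remains admissible for each $s_j$ (binders inside $s_j$ are already binders inside $q$, so the free-for condition restricts to subterms), then apply the induction hypothesis to obtain $\rk{s_j\sigma}=\rk{s_j}$, and finally combine with the bijection to conclude $\rk{q\sigma}=1+\max_j\rk{s_j\sigma}=1+\max_j\rk{s_j}=\rk{q}$.

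The step requiring the most care — and really the only nontrivial point — is case~(b) of the classification: one must make sure that substitution cannot manufacture a genuinely new subordinate $\epsilon$-subterm hiding inside some $t_i$. The free-for condition on $\sigma$ is exactly what rules this out, and pinning this observation down rigorously (together with checking that the relevant substitutions commute with the formation of subordinate subterms) is the main obstacle to a fully formal proof.
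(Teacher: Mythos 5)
The paper states this proposition without proof, so there is no official argument to compare against; judged on its own terms, your proposal is sound and is essentially the proof one would want to write out. Generalizing to ``rank is invariant under admissible substitution'' is the right move, since the induction must carry the substitution through to the subordinate subterms, and the proposition is then the instance $q \seq p$, $p\sigma \seq e$. Two points of hygiene. First, ``bijection'' is more than you need and more than you get: distinct subordinate $\eps$-terms $s \not\seq s'$ of $q$ may satisfy $s\sigma \seq s'\sigma$, so $s \mapsto s\sigma$ is in general only a surjection onto the subordinate $\eps$-terms of $q\sigma$ (up to $\seq$); that suffices, since rank depends only on the maximum of the ranks of the subordinate terms. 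Second, by the paper's own remark following the definition of subordination, a subordinate $\eps$-term $s_j$ of $q = \meps y C$ is not literally a subterm of $q$ --- only a variant $s_j'' \seq s_j$ occurs in $C$ --- so your restriction of the free-for condition ``to subterms'' should be phrased for the occurring variants (whose binders are, as you say, binders of $q$); since $\seq$ preserves free variables and rank, this costs nothing. With those caveats, your case~(b) is indeed the crux and is correctly handled: admissibility forces $y \notin \FV(t_i)$ whenever $x_i$ occurs in the scope of $\meps y$, so no $\eps$-term buried inside a substituted $t_i$ can have $y$ free, hence none becomes subordinate, and the base case $\rk{q}=1$ as well as the step go through.
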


\subsection{Axioms and Proofs}

\begin{defn} The axioms of the \emph{elementary calculus}~\EC{} are
\begin{align}
  & A & \text{for any tautology~$A$} \tag{Taut}
\end{align}
and its only rule of inference is
\[\infer[$MP$]{A}{A & A \lif B}\]
For \ECq, we add
\begin{align}
t & = t & \text{for any term~$t$} \tag{$=_1$} \\
t = u & \lif (\st A x t \liff \st A x u). \tag{$=_2$}
\end{align}
The axioms and rules of the (intensional) \emph{\eps-calculus}~\ECe{} (\ECeq)
are those of~\EC{} (\ECq) plus the \emph{critical formulas}
\begin{align}
A(t) \lif A(\meps x {A(x)}). \tag{crit}
\end{align}
The axioms and rules of the \emph{extensional \eps-calculus}~$\ECex$
are those of~\ECeq{} plus
\begin{align}
  (\forall x(A(x) \liff B(x)))^\eps & \lif \meps x {A(x)} = \meps x
  {B(x)}, \tag{ext} \\
  \intertext{that is,}
  A(\meps x{\lnot(A(x) \liff B(x))}) \liff B(\meps x{\lnot(A(x) \liff
    B(x))}) & \lif \meps x {A(x)} = \meps x {B(x)} \notag
\end{align}
The axioms and rules of \PC, \PCe, $\PCe^\ext$ are those of \EC, \ECe,
$\ECe^\ext$, respectively, together with the axioms
\begin{align}
A(t) & \lif \exists x\, A(x)  \tag{Ax$\exists$} \\
\forall x\, A(x) & \lif A(t) \tag{Ax$\forall$}
\end{align}
and the rules
\[
\infer[R\exists]{\exists x\, A(x) \lif B}{A(x) \lif B} \qquad
\infer[R\forall]{B \lif \forall x\, A(x)}{B \lif A(x)} 
\]
Applications of these rules must satisfy the \emph{eigenvariable
  condition}, viz., the variable $x$ must not appear in the conclusion
or anywhere below it in the proof.
\end{defn}

\begin{defn}
  If $\Gamma$ is a set of formulas, a \emph{proof of $A$ from $\Gamma$
    in $\PCe^\ext$} is a sequence~$\pi$ of formulas $A_1$, \dots, $A_n
  = A$ where for each $i \le n$, $A_i \in \Gamma$, $A_i$ is an
  instance of an axiom, or follows from formulas $A_j$ ($j<i$) by a
  rule of inference.
  
  If $\pi$ only uses the axioms and rules of \EC, \ECe, $\ECe^\ext$,
  etc., then it is a proof of $A$ from $\Gamma$ in \EC, \ECe,
  $\ECe^\ext$, etc., and we write $\Gamma \proves[\pi]{} A$, $\Gamma
  \proves[\pi]{\eps} A$, $\Gamma \proves[\pi]{\eps\ext} A$, etc. 

  We say that $A$ is provable from $\Gamma$ in \EC, etc. ($\Gamma
  \proves{} A$, etc.), if there is a proof of $A$ from $\Gamma$ in
  \EC, etc.
\end{defn}

Note that our definition of proof, because of its use of $\seq$,
includes a tacit rule for renaming bound variables. Note also that
substitution into members of~$\Gamma$ is \emph{not} permitted.
However, we can simulate a provability relation in which substitution
into members of $\Gamma$ is allowed by considering $\inst \Gamma$, the
set of all substitution instances of members of~$\Gamma$.  If $\Gamma$
is a set of sentences, then $\inst\Gamma = \Gamma$.

\begin{proposition}\label{proof-subst}
  If $\pi = A_1$, \dots, $A_n \equiv A$ is a proof of $A$ from
  $\Gamma$ and $x \notin \FV(\Gamma)$ is not an eigenvariable in
  $\pi$, then $\st \pi x t = \st {A_1} x t$, \dots, $\st {A_n} x t$ is
  a proof of $\st A x t$ from $\inst \Gamma$.
\end{proposition}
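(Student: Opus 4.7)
The plan is a straightforward induction on the length~$n$ of~$\pi$, showing that for each~$i \le n$, $\st{A_i}{x}{t}$ is either a member of~$\inst\Gamma$, an instance of an axiom, or follows by a rule of inference from earlier formulas in~$\st{\pi}{x}{t}$. Before running the induction I would first normalize~$\pi$ by renaming any eigenvariable~$y$ of~$\pi$ that happens to lie in~$\FV(t)$ to a fresh variable throughout the subproof it governs; the eigenvariable condition guarantees such a renaming produces another proof with the same endformula and the same set of assumptions. Since $x$ is by hypothesis not an eigenvariable, this preprocessing leaves $x$ alone, and afterward we may assume that no eigenvariable of~$\pi$ occurs in~$\FV(t)$ and none equals~$x$.

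The cases of the induction step are as follows. If $A_i \in \Gamma$, then $x \notin \FV(A_i)$, so $\st{A_i}{x}{t} = A_i \in \Gamma \subseteq \inst\Gamma$. If $A_i$ is an instance of an axiom schema among (Taut), $(=_1)$, $(=_2)$, (crit), (ext), (Ax$\exists$), (Ax$\forall$), then substitution commutes with the logical connectives, the quantifiers, and the \eps-binder (up to~$\seq$, with bound variables in~$A_i$ renamed to avoid capture of~$\FV(t)$), so $\st{A_i}{x}{t}$ is again an instance of the same schema with the parameters transformed by $\st{\cdot}{x}{t}$. If $A_i$ is obtained from~$A_j$ and $A_k \seq A_j \lif A_i$ by MP, then $\st{A_k}{x}{t} \seq \st{A_j}{x}{t} \lif \st{A_i}{x}{t}$, so MP still applies. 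Finally, if $A_i$ is obtained from~$A_j$ by~$R\exists$ or~$R\forall$ with eigenvariable~$y$, the preprocessing ensures $y \ne x$ and $y \notin \FV(t)$, so the substituted premise and conclusion have exactly the form required by the rule; moreover, the eigenvariable condition is preserved, since $y$ was absent from~$A_i$ and from every formula below it in~$\pi$, and substituting~$t$ for~$x$ can introduce only variables from~$\FV(t)$, none of which is~$y$.

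The principal obstacle is managing the interaction of substitution with binding and with the eigenvariable condition: unrestricted substitution could capture free variables of~$t$ inside an $\eps$-term or a quantifier, or drop an eigenvariable into a position below where it is forbidden. Both difficulties are dissolved by the preprocessing step together with the standing convention that $\st{\cdot}{x}{t}$ is defined up to~$\seq$ and renames bound variables to avoid capture; once these are in place the verification is routine and essentially syntactic.
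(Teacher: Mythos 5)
The paper states this proposition without proof, and your induction on the length of $\pi$, with the preliminary renaming of any eigenvariables of $\pi$ that lie in $\FV(t)$, is the intended routine argument and is correct; the renaming step is precisely the point that needs care, since without it the substitution could reintroduce an eigenvariable into a conclusion and break the eigenvariable condition. Note also that this renaming (and not the substitution itself, which fixes $\Gamma$ because $x \notin \FV(\Gamma)$) may turn assumption formulas into substitution instances, which is consistent with the conclusion being a proof from $\inst\Gamma$ rather than from $\Gamma$.
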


\begin{lemma}\label{ded-lemma}
  If $\pi$ is a proof of $B$ from $\Gamma \cup \{A\}$, then there is a
  proof $\pi[A]$ of $A \lif B$ from $\Gamma$, provided $A$ 
  contains no eigenvariables of~$\pi$ free.
\end{lemma}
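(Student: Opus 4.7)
The plan is induction on the length~$n$ of $\pi = A_1, \ldots, A_n \equiv B$: for each~$i \le n$ I construct a subproof of $A \lif A_i$ from~$\Gamma$, and $\pi[A]$ is the concatenation of these subproofs in order. The easy cases are uniform. If $A_i$ is an instance of any of the axioms of the system (tautology, $=_1$, $=_2$, crit, ext, Ax$\exists$, Ax$\forall$) or a member of~$\Gamma$, I append $A_i$, the tautology $A_i \lif (A \lif A_i)$, and $A \lif A_i$ by MP. If $A_i \equiv A$, the one-line subproof consisting of the propositional tautology $A \lif A$ suffices. If $A_i$ is obtained by MP from $A_j$ and $A_k \equiv A_j \lif A_i$ with $j,k < i$, then by induction $A \lif A_j$ and $A \lif (A_j \lif A_i)$ are already available, and the tautology $(A \lif A_j) \lif ((A \lif (A_j \lif A_i)) \lif (A \lif A_i))$ together with two applications of MP yields $A \lif A_i$.

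The interesting case is a quantifier rule. Suppose $A_i \equiv \exists x\, C(x) \lif D$ is inferred from $C(x) \lif D$ by R$\exists$ with eigenvariable~$x$; the hypothesis on $A$ forces $x \notin \FV(A)$. From the inductively available $A \lif (C(x) \lif D)$ a propositional rearrangement yields $C(x) \lif (A \lif D)$; reapplying R$\exists$ with eigenvariable~$x$ produces $\exists x\, C(x) \lif (A \lif D)$; a final propositional rearrangement gives $A \lif (\exists x\, C(x) \lif D) \equiv A \lif A_i$. The R$\forall$ case is symmetric, using $B \lif (A \lif A(x))$ in place of the existential rearrangement.

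The main obstacle is verifying that every quantifier-rule step in $\pi[A]$ continues to satisfy its eigenvariable condition. The new R$\exists$ step reuses the eigenvariable $x$; by hypothesis $x \notin \FV(A)$, and by the original condition $x \notin \FV(D)$, so $x$ does not occur free in the new conclusion $\exists x\, C(x) \lif (A \lif D)$ (it is bound in $\exists x\, C(x)$, absent from $A$, and absent from $D$). What remains is to confirm that no line below this step in $\pi[A]$ reintroduces $x$ free. The subproofs for each $A_m$ with $m > i$ introduce only propositional combinations of previously available formulas $A \lif A_j$, so $x$ could enter only through some $A_j$ ($j \le i$) that itself contains $x$ free. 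This last possibility is ruled out under the standard convention that eigenvariables of~$\pi$ are chosen globally fresh---occurring only in the sub-derivation leading to their own R$\exists$/R$\forall$ premise---a property that can be secured in advance by renaming and that the hypothesis $x \notin \FV(A)$ keeps compatible with~$A$. With this bookkeeping in place, $\pi[A]$ is a legal proof of $A \lif B$ from~$\Gamma$.
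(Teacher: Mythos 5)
Your proof is correct and is exactly the argument the paper intends: the paper's own proof is the one-line remark ``by induction on the length of~$\pi$, as in the classical case,'' and your write-up is that classical induction with the cases filled in. The one point that genuinely needs care here --- that the reused eigenvariable must not reappear free in any line of $\pi[A]$ below the new $R\exists$/$R\forall$ step, e.g.\ inside a tautology $(A \lif A_j) \lif (\dots)$ referencing an earlier $A_j$ --- is the point you identify, and securing globally fresh eigenvariables by prior renaming (the same device the paper uses in its soundness proof) disposes of it.
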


\begin{proof}
  By induction on the length of~$\pi$, as in the classical case.
\end{proof}

\begin{theorem}[Deduction Theorem]\label{deduction-thm}
  If $\Sigma \cup \{A\}$ is a set of sentences, $\Sigma \proves{} A \lif B$
  iff $\Sigma \cup \{A\} \proves{} B$.
\end{theorem}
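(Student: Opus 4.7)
The plan is to reduce both directions to a single application of Lemma~\ref{ded-lemma} plus one use of Modus Ponens, with the hypothesis that $\Sigma \cup \{A\}$ consists of sentences playing a single specific role.

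For the easy direction, I would assume $\Sigma \proves{} A \lif B$, take any proof $\pi'$ witnessing this, and extend it by the two lines $A$ and $B$: the former is legal because $A \in \Sigma \cup \{A\}$, and the latter follows by MP from $A \lif B$ and~$A$. This yields a proof of $B$ from $\Sigma \cup \{A\}$.

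For the forward direction, I would take a proof $\pi$ of $B$ from $\Sigma \cup \{A\}$ and invoke Lemma~\ref{ded-lemma} to obtain a proof $\pi[A]$ of $A \lif B$ from $\Sigma$. The only side condition is that $A$ contain no eigenvariables of $\pi$ free; this is exactly where the sentence hypothesis enters, since $A$ being a sentence gives $\FV(A) = \emptyset$, so the condition is satisfied vacuously. I expect no significant obstacle here, as the substantive proof-transformation work is already packaged in Lemma~\ref{ded-lemma}; the sentence assumption is there precisely to neutralize the eigenvariable side condition, which would otherwise fail in \PC, \PCe, $\PCe^\ext$ whenever $A$ contained a free variable that $\pi$ generalizes over via $R\forall$ or $R\exists$.
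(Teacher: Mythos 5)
Your proposal is correct and is exactly the argument the paper intends (the paper leaves the proof implicit, but the theorem is placed immediately after Lemma~\ref{ded-lemma} precisely so that the forward direction follows from that lemma, with the sentence hypothesis discharging the eigenvariable side condition, and the converse is the routine extension by $A$ and Modus Ponens). No gaps.
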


\begin{corollary}\label{incons}
  If $\Sigma \cup \{A\}$ is a set of sentences, $\Sigma \proves{} A$
  iff $\Sigma \cup \{\lnot A\} \proves{} \bot$.
\end{corollary}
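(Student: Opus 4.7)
The plan is to derive this as an easy consequence of the Deduction Theorem (Theorem~\ref{deduction-thm}) together with classical propositional reasoning, using that both $A$ and $\lnot A$ are sentences (so the Deduction Theorem applies to $\lnot A$ as well).

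For the forward direction, I would assume $\Sigma \proves{} A$. By monotonicity of the provability relation (any proof from $\Sigma$ is also a proof from $\Sigma \cup \{\lnot A\}$), this gives $\Sigma \cup \{\lnot A\} \proves{} A$. Trivially $\Sigma \cup \{\lnot A\} \proves{} \lnot A$, and the propositional tautology $A \lif (\lnot A \lif \bot)$ is available as (Taut). Two applications of modus ponens then yield $\Sigma \cup \{\lnot A\} \proves{} \bot$.

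For the backward direction, assume $\Sigma \cup \{\lnot A\} \proves{} \bot$. Since $A$ is a sentence, so is $\lnot A$, and $\Sigma \cup \{\lnot A\}$ remains a set of sentences, so the Deduction Theorem applies and gives $\Sigma \proves{} \lnot A \lif \bot$. The classical tautology $(\lnot A \lif \bot) \lif A$ is an instance of (Taut), and modus ponens yields $\Sigma \proves{} A$.

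The only thing one has to be slightly careful about is the hypothesis that $A$ (hence $\lnot A$) is a sentence, which is exactly what licenses the appeal to Theorem~\ref{deduction-thm} in the backward direction; without this, the eigenvariable side condition of Lemma~\ref{ded-lemma} could in principle block the discharge. Apart from this bookkeeping point, the argument is purely propositional and there is no real obstacle.
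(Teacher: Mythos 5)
Your argument is correct and is exactly the intended one: the paper states this corollary without proof as an immediate consequence of the Deduction Theorem, and your two directions (monotonicity plus the tautology $A \lif (\lnot A \lif \bot)$ one way, the Deduction Theorem applied to the sentence $\lnot A$ plus the tautology $(\lnot A \lif \bot) \lif A$ the other way) are the standard way to fill that in. Your remark that the sentence hypothesis is what licenses the appeal to Theorem~\ref{deduction-thm} is also exactly the right bookkeeping point.
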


\begin{lemma}[\eps-Embedding Lemma]
  If $\Gamma \proves[\pi]{\eps\forall} A$, then there is a proof
  $\pi^\eps$ so that $\inst{{\Gamma^\eps}} \proves[\pi^\eps]{\eps}
  A^\eps$
\end{lemma}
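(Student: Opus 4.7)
The plan is to proceed by induction on the length of $\pi$, translating it pointwise into a sequence $A_1^\eps,\dots,A_n^\eps$ and then patching the result into a legitimate $\ECe$-proof from $\inst{{\Gamma^\eps}}$. Most steps translate almost directly: the interesting cases are the quantifier axioms, which turn into critical formulas up to propositional rearrangement, and the two quantifier rules, which are eliminated by using Proposition~\ref{proof-subst} to substitute the appropriate $\eps$-term for the eigenvariable throughout the preceding subproof.

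For the base cases: tautologies translate to tautologies because the $\eps$-translation commutes with the propositional connectives; the equality axioms are similarly preserved; an assumption $A_i\in\Gamma$ gives $A_i^\eps\in\Gamma^\eps\subseteq\inst{{\Gamma^\eps}}$; and a critical formula translates to another critical formula. The pivotal observation is that the existential axiom $A(t)\lif\exists x\,A(x)$ translates to $A^\eps(t^\eps)\lif A^\eps(\meps x{A(x)^\eps})$, which is itself a critical formula of $\ECe$, and that the universal axiom $\forall x\,A(x)\lif A(t)$ translates to $A^\eps(\meps x{\lnot A(x)^\eps})\lif A^\eps(t^\eps)$, which is the propositional contrapositive of the critical formula $\lnot A^\eps(t^\eps)\lif\lnot A^\eps(\meps x{\lnot A(x)^\eps})$ applied to the formula $\lnot A(x)^\eps$.

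The rules need more care. MP is preserved outright, since $(A\lif B)^\eps\seq A^\eps\lif B^\eps$. For $R\exists$, which infers $\exists x\,A(x)\lif B$ from $A(x)\lif B$, the IH yields an $\ECe$-proof $\pi_0^\eps$ of $A(x)^\eps\lif B^\eps$ from $\inst{{\Gamma^\eps}}$. The eigenvariable condition ensures that $x\notin\FV(B)=\FV(B^\eps)$ and that $x$ is not the eigenvariable of any rule application inside $\pi_0^\eps$ (otherwise it would still be free below). One may then invoke Proposition~\ref{proof-subst} with $t=\meps x{A(x)^\eps}$: substituting $t$ for $x$ throughout $\pi_0^\eps$ produces a proof of $A^\eps(\meps x{A(x)^\eps})\lif B^\eps$ from $\inst{{\Gamma^\eps}}$ (the instances of $\Gamma^\eps$ generated by the substitution remain in the $\mathrm{inst}$-closure, which is precisely why we need it), and this is the translation of the conclusion. $R\forall$ is handled dually, substituting $\meps x{\lnot A(x)^\eps}$ for the eigenvariable.

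The main technical burden is not the induction itself but the bookkeeping around substitution. In particular, the argument silently relies on a commutation lemma $(\st A x t)^\eps\seq\st{A^\eps}{x}{t^\eps}$, needed to match $(A(t))^\eps$ with $A^\eps(t^\eps)$ in the axiom cases and to keep the conclusion of the quantifier-rule cases in the desired shape after substitution. One must also check that the chosen $\eps$-term causes no variable capture in $\pi_0^\eps$ and that the remaining eigenvariable conditions survive the substitution; both points follow from the eigenvariable conditions of $\PCe$, but warrant explicit attention before the induction is set in motion.
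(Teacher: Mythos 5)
Your proposal is correct and follows exactly the inductive argument that the paper defers to the reference for: quantifier axioms become critical formulas (via contraposition in the $\forall$ case), quantifier rules are discharged by substituting the matching $\eps$-term for the eigenvariable via Proposition~\ref{proof-subst}, and the substitution--translation commutation $(\st A x t)^\eps \seq \st{A^\eps}{x}{t^\eps}$ is correctly identified as the key bookkeeping lemma. Nothing essential is missing relative to the paper's own (one-line) proof.
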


\begin{proof}
By induction, see \cite{MoserZach:06}.
\end{proof}

\section{Semantics and Completeness}

\subsection{Semantics for \ECex}

\begin{defn}
  A \emph{structure}~$\M = \langle \card \M, (\cdot)^\M\rangle$
  consists of a nonempty \emph{domain}~$\card \M \neq \emptyset$ and a
  mapping $(\cdot)^\M$ on function and predicate symbols where
$(f^0_i)^\M  \in \card \M$, $(f^n_i)^M \in \card \M^{\card \M^n}$, and 
  $(P^n_i)^\M \subseteq \card \M^n$.
\end{defn}

\begin{defn}
  An \emph{extensional choice function~$\Phi$ on $\M$} is a function
  $\Phi\colon \wp(\card\M) \to \card\M$ where $\Phi(X) \in X$ whenever
  $X \neq \emptyset$.
\end{defn}

Note that $\Phi$ is total on $\wp(\card\M)$, and so
$\Phi(\emptyset) \in \card{\M}$.

\begin{defn}
  An \emph{assignment~$s$ on $\M$} is a function $s\colon \Var \to
  \card\M$.

  If $x \in \Var$ and $m \in \card\M$, $\st s x m$ is the assignment
  defined by
\[
\st s x m(y) = \begin{cases} m & \text{if $y = x$} \\ s(y) &
  \text{otherwise}
\end{cases}
\] 
\end{defn}

\begin{defn}\label{ext-sat}
  The \emph{value~$\val \M \Phi s t$ of a term} and the
  \emph{satisfaction relation $\sat \M \Phi s A$} are defined as
  follows:
\begin{enumerate}
\item $\val \M \Phi s x =  s(x)$
\item $\sat \M \Phi s \top$ and $\M, \Phi, s \not\models \bot$
\item $\val \M \Phi s {f^n_i(t_1, \dots, t_n)} = (f^n_i)^\M(\val \M
  \Phi s {t_1}, \dots, \val \M \Phi s {t_n})$ 
\item $\sat \M \Phi s {t_1 = t_n}$ iff $\val \M
  \Phi s {t_1} = \val \M \Phi s {t_2}$
\item $\sat \M \Phi s {P^n_i(t_1, \dots, t_n)}$ iff $\langle\val \M
  \Phi s {t_1}, \dots, \val \M \Phi s {t_n}\rangle \in (P^n_i)^\M$
\item\label{epsilon-sat} $\val \M \Phi s {\meps x{A(x)}} = \Phi(\val \M
  \Phi s {A(x)})$ where
\[
\val \M \Phi s {A(x)} = \{ m \in \card\M : \sat \M \Phi {\st s x m} A(x)\}
\]
\item $\sat \M \Phi s {\exists x\, A(x)}$ iff for some $m \in
  \card\M$, $\sat \M \Phi {\st s x m} {A(x)}$
\item $\sat \M \Phi s {\forall x\, A(x)}$ iff for all $m \in
  \card\M$, $\sat \M \Phi {\st s x m} {A(x)}$
\end{enumerate}
\end{defn}

\begin{proposition}
  If $s(x) = s'(x)$ for all $x \notin \FV(t) \cup \FV(A)$, then $\val
  \M \Phi s t = \val \M \Phi {s'} t$ and $\sat \M \Phi s A$ iff $\sat
  \M \Phi {s'} A$.
\end{proposition}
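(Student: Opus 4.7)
The plan is to prove this standard coincidence lemma by a simultaneous induction on the complexity of~$t$ and~$A$. Simultaneous induction is essential because clause~\ref{epsilon-sat} of Definition~\ref{ext-sat} makes term valuation recurse through the satisfaction relation. A convenient measure is total symbol count, which strictly decreases when passing from $\meps x{A(x)}$ to $A(x)$. (I read the hypothesis in the natural way, as agreement on $\FV(t)\cup\FV(A)$, since agreement only off this set would clearly not suffice.)

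The base and Boolean cases are routine. When $t$ is a variable, only $s(x)$ for $x\in\FV(t)=\{x\}$ matters; for a constant symbol, $\top$, or~$\bot$, neither assignment enters the computation. For function applications, predicate atoms, equations, and the propositional connectives, the claim reduces directly to the IH applied to the immediate subexpressions, whose free variables are contained in those of the compound expression.

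The crux is the \eps-case $t\seq\meps x{A(x)}$, where $\FV(t)=\FV(A(x))\setminus\{x\}$. It suffices to prove $\val\M\Phi s{A(x)}=\val\M\Phi{s'}{A(x)}$ as subsets of~$\card\M$, since $\Phi$ is then applied to the same argument. Fix $m\in\card\M$: the modified assignments $\st s x m$ and $\st{s'}x m$ agree on $\FV(A(x))$ (both send $x$ to~$m$; on every other free variable of~$A(x)$ they agree by hypothesis). Applying the IH for satisfaction to the strictly simpler formula $A(x)$ yields $\sat\M\Phi{\st s x m}{A(x)}$ iff $\sat\M\Phi{\st{s'}x m}{A(x)}$, so the two sets coincide. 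The quantifier cases $\exists x\,A(x)$ and $\forall x\,A(x)$ run in exactly the same way via the IH on $A(x)$ under $\st s x m$ and $\st{s'}x m$.

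The only real subtlety is well-foundedness of the simultaneous induction. But since $A(x)$ is strictly smaller than $\meps x{A(x)}$ in symbol count, the appeal to the IH inside the \eps-clause is legitimate and no circularity arises.
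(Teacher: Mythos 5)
Your proof is correct and is exactly the standard argument the paper intends here (the paper states this proposition without proof): a simultaneous induction on terms and formulas, with the \eps-clause handled by showing the two extension sets $\val \M \Phi s {A(x)}$ and $\val \M \Phi {s'} {A(x)}$ coincide before applying~$\Phi$. You are also right to read the hypothesis as agreement on $\FV(t)\cup\FV(A)$; the ``$\notin$'' in the statement is evidently a typo, since agreement only off the free variables would not suffice.
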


\begin{proposition}[Substitution Lemma]
If $m = \val \M \Phi s u$, then 
  $\val \M \Phi s {t(u)} = \val \M \Phi {\st s x m} {t(x)}$ and
  $\sat \M \Phi s {A(u)}$ iff $\sat \M \Phi {\st s x m} {A(x)}$
\end{proposition}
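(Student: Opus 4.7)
The plan is to prove both claims simultaneously by induction on the combined structure of the term $t(x)$ and the formula $A(x)$. The reason simultaneity is forced is that $\meps y {B(y)}$ is a term whose semantics appeals to the satisfaction relation on the body $B$, while formulas contain terms, so a single induction cannot separate the two claims.

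The base case handles $t = x$ (both sides equal $m$ by the hypothesis $m = \val \M \Phi s u$), $t = y$ with $y \neq x$ (both sides equal $s(y)$), and $t$ a constant symbol (both sides equal its interpretation). The inductive cases for function symbols, predicate symbols, the equality predicate, the propositional connectives, and $\bot$, $\top$ are routine: apply the induction hypothesis componentwise and use the clauses of Definition~\ref{ext-sat}. For the quantifier clauses $\exists y\, B(y,x)$ and $\forall y\, B(y,x)$, the freeness condition ``$u$ is free for $x$ in $A(x)$'' guarantees that $y \notin \FV(u)$ and $y \neq x$; then $\val \M \Phi {\st s y n} u = \val \M \Phi s u = m$ for every $n \in \card\M$, and $\st{\st s y n} x m = \st{\st s x m} y n$, so the induction hypothesis on $B$ with assignment $\st s y n$ gives the desired equivalence for each $n$ and hence the quantifier clause.

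The main case, and the one requiring care, is the \eps-term clause: $t(x) \seq \meps y {B(y,x)}$. Unfolding clause~\ref{epsilon-sat} of Definition~\ref{ext-sat},
\[
\val \M \Phi s {\meps y {B(y,u)}} = \Phi(X), \quad
\val \M \Phi {\st s x m} {\meps y{B(y,x)}} = \Phi(Y),
\]
where $X = \{n \in \card\M : \sat \M \Phi {\st s y n} {B(y,u)}\}$ and $Y = \{n \in \card\M : \sat \M \Phi {\st{\st s x m} y n} {B(y,x)}\}$. To conclude $\Phi(X) = \Phi(Y)$, it suffices to show $X = Y$. For any fixed $n$, the freeness condition together with the convention that bound variables are renamed if needed ensures $y \neq x$ and $y \notin \FV(u)$, so $\val \M \Phi {\st s y n} u = m$ and $\st{\st s y n} x m = \st{\st s x m} y n$. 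Applying the induction hypothesis on $B$ with assignment $\st s y n$ yields $n \in X \Leftrightarrow n \in Y$.

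The main obstacle is handling bound-variable capture: one must appeal, in both the quantifier and \eps-term cases, to the paper's convention that $t(u)$ is defined only when $u$ is free for $x$ in $t(x)$, renaming bound variables of $t(x)$ as needed. Since $\val \M \Phi s{\cdot}$ and $\sat \M \Phi s {\cdot}$ are invariant under $\seq$ (an easy separate induction that is standard and may be assumed), we may without loss of generality pick representatives in which no bound variable of $t(x)$ or $A(x)$ is either $x$ or a free variable of $u$, which is precisely what the above argument requires.
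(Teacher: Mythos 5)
Your proof is correct; the paper states this proposition without proof, and your simultaneous induction on the combined structure of terms and formulas is exactly the standard argument it tacitly assumes. The one genuinely non-routine step---the $\eps$-term case, where equality of the sets $X$ and $Y$ must be established before applying $\Phi$, using that $y\neq x$ and $y\notin\FV(u)$ by the ``free for'' convention---is handled properly, as is the reliance on invariance of $\val \M \Phi s \cdot$ and $\sat \M \Phi s \cdot$ under renaming of bound variables and on the preceding proposition that assignments agreeing on free variables give the same values.
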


\begin{defn}
\begin{enumerate}
\item $A$ is \emph{locally true} in $\M$ w.r.t.\ $\Phi$ and~$s$ iff
  $\sat \M \Phi s A$.
\item $A$ is \emph{true} in $\M$ with respect to $\Phi$, $\M, \Phi
  \models A$, iff for all $s$ on $\M$: $\sat \M \Phi s A$.
\item $A$ is \emph{generically true} in $\M$ with respect to~$s$, $\M,
  s \models^g A$, iff for all choice functions $\Phi$
  on~$\M$: $\sat \M \Phi s A$.
\item $A$ is \emph{generically valid} in $\M$, $\M \models A$, if for
  all choice functions $\Phi$ and assignments~$s$ on~$\M$: $\sat \M
  \Phi s A$.
\end{enumerate}
\end{defn}

\begin{defn} Let $\Gamma \cup\{A\}$ be a set of formulas.
\begin{enumerate}
\item $A$ is a \emph{local consequence} of $\Gamma$, $\Gamma
  \models^l A$, iff for all $\M$, $\Phi$, and $s$:\\
  \qquad if $\sat \M \Phi s \Gamma$ then $\sat \M \Phi s A$.
\item $A$ is a \emph{truth consequence} of $\Gamma$, $\Gamma
  \models A$, iff for all $\M$, $\Phi$:\\
  \qquad if $\M, \Phi \models \Gamma$
  then $\M, \Phi \models A$.
\item $A$ is a \emph{generic consequence} of $\Gamma$, $\Gamma
  \models^g A$, iff for all $\M$ and $s$:\\
  \qquad if $\M, s \models^g \Gamma$
  then $\M \models A$.
\item $A$ is a \emph{generic validity consequence} of $\Gamma$, $\Gamma
  \models^v A$, iff for all $\M$:\\
  \qquad if $\M \models^v \Gamma$ then $\M
  \models A$.
\end{enumerate}
\end{defn}

\begin{proposition}
  If $\Sigma \cup \{A\}$ is a set of sentences, $\Sigma \models^l A$
  iff $\Sigma \models A$
\end{proposition}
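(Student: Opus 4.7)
The plan is to unfold the definitions of $\models^l$ and $\models$ and exploit the fact that for sentences, satisfaction in $\M, \Phi$ does not depend on the assignment. For the $(\Rightarrow)$ direction, I would assume $\Sigma \models^l A$ and $\M, \Phi \models \Sigma$. The latter unfolds to: for every assignment $s$ and every $\sigma \in \Sigma$, $\sat \M \Phi s \sigma$. To show $\M, \Phi \models A$, I would fix an arbitrary $s$; since $\sat \M \Phi s \Sigma$ then holds, the local-consequence hypothesis gives $\sat \M \Phi s A$, and $s$ being arbitrary yields $\M, \Phi \models A$. Notice that this direction does not actually require the sentence hypothesis.

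For the $(\Leftarrow)$ direction, I would assume $\Sigma \models A$ and suppose $\sat \M \Phi s \Sigma$ for some $\M$, $\Phi$, and $s$, with the goal of establishing $\sat \M \Phi s A$. Here the sentence hypothesis is essential: because every $\sigma \in \Sigma$ has $\FV(\sigma) = \emptyset$, the preceding coincidence proposition (assignments agreeing on the free variables of a formula give the same satisfaction value) implies that $\sat \M \Phi {s'} \sigma$ for every assignment $s'$. Hence $\M, \Phi \models \sigma$ for each $\sigma \in \Sigma$, so $\M, \Phi \models \Sigma$. Applying the truth-consequence hypothesis then yields $\M, \Phi \models A$, and specializing back to the original $s$ gives $\sat \M \Phi s A$. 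Since $A$ is itself a sentence, one could equivalently conclude $\sat \M \Phi {s'} A$ for any $s'$, but only the original $s$ is needed.

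There is no real obstacle; the only point that requires care is the second direction, where the sentence assumption is what licenses lifting single-assignment satisfaction of each $\sigma \in \Sigma$ to truth of $\sigma$ in $\M$ under $\Phi$. This use of the coincidence proposition is exactly the reason the proposition fails in general for formulas with free variables, so the hypothesis that $\Sigma \cup \{A\}$ consists of sentences cannot be dropped in the $(\Leftarrow)$ direction.
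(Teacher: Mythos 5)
Your proof is correct and is exactly the standard argument the paper leaves implicit (the proposition is stated without proof there): unfold the two consequence relations and use the coincidence proposition to make satisfaction of sentences assignment-independent in the right-to-left direction. Your observation that the sentence hypothesis is needed only for that direction is also accurate.
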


\begin{proposition}
  If $\Sigma \cup \{A, B\}$ is a set of sentences, $\Sigma \cup \{A\}
  \models B$ iff $\Sigma \models A \lif B$.
\end{proposition}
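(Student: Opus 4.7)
The plan is to unfold the definition of the truth-consequence relation on both sides and exploit the observation that, for a sentence $C$, the proposition on agreement of variables off $\FV(C) = \emptyset$ forces $\sat \M \Phi s C$ to be independent of $s$. Hence $\M, \Phi \models C$ is equivalent to $\sat \M \Phi s C$ for any single $s$; this is the hinge that lets the quantifier over assignments implicit in $\models$ interact cleanly with the clause for $\lif$.

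For the forward direction, I would assume $\Sigma \cup \{A\} \models B$, fix $\M, \Phi$ with $\M, \Phi \models \Sigma$, and aim to verify $\sat \M \Phi s {A \lif B}$ for every $s$. I would split on whether $\M, \Phi \models A$. If it does, then $\M, \Phi \models \Sigma \cup \{A\}$, so by hypothesis $\M, \Phi \models B$, so $\sat \M \Phi s B$, and the conditional holds at $s$. If it does not, then since $A$ is a sentence, $\sat \M \Phi s A$ fails at every $s$, and the conditional is vacuously satisfied.

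For the converse, I would assume $\Sigma \models A \lif B$ and fix $\M, \Phi$ with $\M, \Phi \models \Sigma \cup \{A\}$. Then $\M, \Phi \models \Sigma$ gives $\M, \Phi \models A \lif B$ by hypothesis, and $\M, \Phi \models A$ gives $\sat \M \Phi s A$ at every $s$; applying the semantic clause for $\lif$ pointwise in $s$ yields $\sat \M \Phi s B$ for every $s$, i.e., $\M, \Phi \models B$.

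There is no real obstacle here: the only point requiring care is that the quantifier over $s$ in the definition of $\models$ must be threaded consistently with the semantic clause for $\lif$, and the hypothesis that $A$ is a sentence is exactly what makes the case split in the forward direction well-defined (were $A$ to contain free variables, $\sat \M \Phi s A$ could vary with $s$, and neither case would apply uniformly). No machinery beyond the satisfaction clauses and the coincidence-on-free-variables proposition is needed.
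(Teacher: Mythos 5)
Your proof is correct and is exactly the standard argument the paper intends (the paper states this proposition without proof): unfold the definition of truth consequence, use that sentencehood of $A$ makes $\sat \M \Phi s A$ independent of $s$ so that $\M,\Phi \not\models A$ yields vacuous satisfaction of the conditional at every assignment, and handle the converse by pointwise modus ponens. You also correctly identify that sentencehood of $A$ is only genuinely needed in the forward direction; nothing is missing.
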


\begin{corollary}
  If $\Sigma \cup \{A\}$ is a set of sentences, $\Sigma \models A$
  iff for no $\M$, $\Phi$, $\M \models \Sigma \cup \{\lnot A\}$
\end{corollary}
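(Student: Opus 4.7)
The plan is to reduce the corollary to the immediately preceding proposition by instantiating it with $B := \bot$. Two simple semantic facts do all the work: (a) $\bot$ has no models, and (b) the propositional connectives are interpreted classically in Definition~\ref{ext-sat}, so tautologies are satisfied under every $\M$, $\Phi$, $s$.

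First I would unpack the right-hand side of the biconditional. The claim ``for no $\M$, $\Phi$ is $\M, \Phi \models \Sigma \cup \{\lnot A\}$'' is equivalent to the truth consequence ``$\Sigma \cup \{\lnot A\} \models \bot$'': since $\sat \M \Phi s \bot$ is false for every $\M$, $\Phi$, $s$, the defining implication of $\models$ (``if $\M, \Phi \models \Sigma \cup \{\lnot A\}$ then $\M, \Phi \models \bot$'') holds precisely when its antecedent fails for every $\M, \Phi$.

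Next I would apply the preceding proposition to the set $\Sigma \cup \{\lnot A, \bot\}$, which still consists entirely of sentences since $\lnot A$ is a sentence and $\bot$ is closed. This yields $\Sigma \cup \{\lnot A\} \models \bot$ iff $\Sigma \models \lnot A \lif \bot$. Finally, because $(\lnot A \lif \bot) \liff A$ is a propositional tautology and tautologies hold under every $\M$, $\Phi$, $s$, the formulas $\lnot A \lif \bot$ and $A$ are locally equivalent in every structure; hence $\Sigma \models \lnot A \lif \bot$ iff $\Sigma \models A$. Chaining the three equivalences gives the corollary.

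There is no genuine obstacle: the only bookkeeping issue is verifying that the hypothesis of the prior proposition (all formulas involved being sentences) transfers to the intermediate set, and this is immediate from the standing assumption that $\Sigma \cup \{A\}$ is a set of sentences.
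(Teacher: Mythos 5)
Your argument is correct and is exactly the intended derivation: the paper states this corollary without proof, immediately after the proposition $\Sigma \cup \{A\} \models B$ iff $\Sigma \models A \lif B$, and your instantiation with $\lnot A$ and $B := \bot$, together with the observations that $\bot$ is never satisfied and that $(\lnot A \lif \bot)$ and $A$ agree under every $\M$, $\Phi$, $s$, is the evident route (mirroring the syntactic Corollary~\ref{incons}). No gaps.
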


\subsection{Soundness and Completeness}

\begin{theorem}
If $\Gamma \proves{\eps} A$, then $\Gamma \models^l A$.
\end{theorem}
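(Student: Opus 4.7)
The plan is to proceed by induction on the length of the proof $\pi$ witnessing $\Gamma \proves[\pi]{\eps} A$. Fix an arbitrary structure $\M$, choice function $\Phi$, and assignment $s$ with $\sat \M \Phi s \Gamma$; I will show $\sat \M \Phi s {A_i}$ for every formula $A_i$ in $\pi$. There are four cases according to how $A_i$ enters the proof: $A_i \in \Gamma$ (immediate from the hypothesis on $s$); $A_i$ is an instance of (Taut); $A_i$ is a critical formula; or $A_i$ follows from earlier $A_j$, $A_k \seq A_j \lif A_i$ by MP.

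For tautologies, one observes that the truth-value of a formula under $\M, \Phi, s$ depends only on the truth-values assigned to its propositional atoms (taking atomic subformulas of $A_i$, including equations and formulas beginning with $\exists$, $\forall$, or whose outermost operator is not a connective, as the atoms), so any propositional tautology is locally true at every $\M, \Phi, s$. Modus ponens is handled by the standard clause for $\lif$ in Definition~\ref{ext-sat}: if $\sat \M \Phi s {A_j}$ and $\sat \M \Phi s {A_j \lif A_i}$, then $\sat \M \Phi s {A_i}$.

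The only non-routine case, and thus the main obstacle, is the critical formulas $A(t) \lif A(\meps x {A(x)})$. Suppose $\sat \M \Phi s {A(t)}$ and let $m = \val \M \Phi s t$. By the Substitution Lemma, $\sat \M \Phi {\st s x m} {A(x)}$, so $m$ lies in the set $X := \val \M \Phi s {A(x)} = \{m' \in \card\M : \sat \M \Phi {\st s x {m'}} {A(x)}\}$. In particular $X \neq \emptyset$, so by the defining property of the extensional choice function $\Phi$, we have $\Phi(X) \in X$. But $\Phi(X) = \val \M \Phi s {\meps x {A(x)}}$ by clause~\ref{epsilon-sat} of Definition~\ref{ext-sat}, so setting $m^* = \val \M \Phi s {\meps x {A(x)}}$ we get $\sat \M \Phi {\st s x {m^*}} {A(x)}$, and a second application of the Substitution Lemma yields $\sat \M \Phi s {A(\meps x {A(x)})}$, as required.

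Taking $A_i \equiv A$ at the end of the induction gives $\sat \M \Phi s A$; since $\M, \Phi, s$ were arbitrary models of $\Gamma$, we conclude $\Gamma \models^l A$. Note that no axioms beyond (Taut) and (crit) are used, so the argument does not need the quantifier rules or the extensionality axiom; those would be needed only for the stronger systems $\PCe$ and $\ECex$.
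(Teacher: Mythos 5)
Your proof is correct and follows essentially the same route as the paper: induction on the length of the derivation, with the critical formulas $A(t) \lif A(\meps x{A(x)})$ as the only nontrivial case, handled by observing that $\val \M \Phi s t \in \val\M\Phi s{A(x)}$ forces this set to be nonempty so that $\Phi$ picks a witness. You merely make explicit the appeals to the Substitution Lemma and the routine tautology/MP cases that the paper leaves implicit.
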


\begin{proof}
  Suppose $\sat\Gamma\Phi s \Gamma$.  We show by induction on the
  length $n$ of a proof $\pi$ that $\sat \M \Phi {s'} A$ for all $s'$
  which agree with $s$ on $\FV(\Gamma)$.  We may assume that no
  eigenvariable~$x$ of $\pi$ is in $\FV(\Gamma)$ (if it is, let $y
  \notin \FV(\pi)$ and not occurring in~$\pi$; consider $\st \pi x y$
  instead of $\pi$).

  If $n = 0$ there's nothing to prove. Otherwise, we distinguish cases
  according to the last line $A_n$ in $\pi$. The only interesting case is when
  $A_n$ is a critical formula, i.e., $A_n \equiv A(t) \lif A(\meps
  x {A(x)})$. Then either $\sat \M \Phi s {A(t)}$ or not (in which
  case there's nothing to prove). If yes, $\sat\M \Phi {s[x/m]} {A(x)}$
  for $m = \val \M \Phi s {t}$, and so $Y = \val \M \Phi s {A(x)} \neq
  \emptyset$.  Consequently, $\Phi(Y) \in Y$, and hence $\sat \M \Phi
  s {A(\meps x {A(x)})}$.
\end{proof}

\begin{lemma}
  If $\Gamma$ is a set of sentences and $\Gamma
  \nproves{\eps} \bot$, then there are $\M$, $\Phi$ so that $\M,
  \Phi \models \Gamma$.
\end{lemma}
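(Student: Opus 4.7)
The plan is a Henkin-style term-model construction. First, I would extend $\Gamma$ to a maximally $\ECe$-consistent set $\Gamma^*$ of sentences by a Lindenbaum argument, made available by the Deduction Theorem~\ref{deduction-thm} and Corollary~\ref{incons}; as a consequence $\Gamma^*$ is closed under $\ECe$-derivability and contains every tautology and every critical formula of the language.

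Next I would construct $\M$ from closed terms. Function and predicate symbols are interpreted syntactically: $f^\M(t_1,\dots,t_n) = f(t_1,\dots,t_n)$ and $(t_1,\dots,t_n) \in P^\M$ iff $P(t_1,\dots,t_n) \in \Gamma^*$. The heart of the proof is the definition of the extensional choice function $\Phi \colon \wp(\card\M) \to \card\M$. My plan is to proceed by recursion on the rank of \eps-terms: once $\val \M \Phi s {e}$ has been fixed for every closed \eps-term $e$ of rank $<k$, the extension of every formula $A(x)$ whose subordinate \eps-terms all have rank $<k$ is already a determinate subset $X \subseteq \card\M$, and I would set $\Phi(X)$ to be (the equivalence class of) $\meps x {A(x)}$ for a canonical choice of such $A$, fixed in advance by a well-ordering of the formulas. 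On subsets of $\card\M$ that are the extension of no formula, $\Phi$ may be chosen arbitrarily subject to $\Phi(X) \in X$. The critical formulas in $\Gamma^*$ guarantee that $\meps x {A(x)}$ really lies in the extension of $A(x)$, so $\Phi$ is a genuine choice function.

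The proof concludes with a truth lemma --- $\sat \M \Phi s A$ iff $A \in \Gamma^*$ for each sentence $A$ --- established by induction on complexity alongside the matching claim that $\val \M \Phi s t$ equals $t$ (up to the quotient) for each closed term $t$. Atomic and propositional cases reduce to maximality; the \eps-case is handled by the recursive construction of $\Phi$. The main obstacle will be the extensionality gap between calculus and semantics: since Definition~\ref{ext-sat} makes $\Phi$ depend only on the set, two formulas $A,B$ with the same extension in $\M$ force $\val \M \Phi s {\meps x {A(x)}} = \val \M \Phi s {\meps x {B(x)}}$, yet $\ECe$ does not in general prove $\meps x {A(x)} = \meps x {B(x)}$. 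My plan to handle this is to quotient closed terms by the Leibniz indiscernibility relation induced by $\Gamma^*$ and then show, using only critical formulas and maximality, that coextensive \eps-terms are forced to be indiscernible in this canonical term model --- the delicate step where the sketch will need the most care.
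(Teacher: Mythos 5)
Your overall architecture---Lindenbaum extension to a maximal consistent $\Gamma^*$, a term model with syntactic interpretations, and a truth lemma proved by simultaneous induction on terms and formulas---is exactly the paper's. But the step you yourself flag as ``the delicate step'' is not merely delicate: as you have set it up, it is impossible. You propose to show, ``using only critical formulas and maximality,'' that coextensive \eps-terms are indiscernible in the canonical model. No such argument can exist, because \ECe{} without the extensionality axiom is \emph{consistent} with their being discernible: take $A(x) = P(x)$ and $B(x) = \lnot\lnot P(x)$. These are provably coextensive, so any extensional $\Phi$ must give $\val \M \Phi s {\meps x {A(x)}} = \val \M \Phi s {\meps x {B(x)}}$; yet $\Gamma = \{Q(\meps x{A(x)}), \lnot Q(\meps x{B(x)})\}$ is \ECe-consistent (and with identity, $\Gamma = \{\lnot(\meps x{A(x)} = \meps x{B(x)})\}$ is \ECeq-consistent). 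Such a $\Gamma$ has no extensional model at all, so the lemma in the form you are trying to prove it---intensional calculus, extensional semantics---is false. The resolution in the paper is that this lemma sits in the soundness-and-completeness argument for \ECex: consistency is taken with respect to the calculus \emph{including} (ext), and (ext) is precisely what is invoked to show that the choice function is well defined (if $T$ is represented by both $A(x)$ and $B(x)$, then (ext) together with maximality puts $\meps x{A(x)} = \meps x{B(x)}$ into $\Gamma^*$, so the two candidate values coincide in the quotient). Completeness for \ECe{} itself is recovered in the paper only by changing the semantics to intensional choice operators indexed by \eps-types.

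Two smaller points. First, the paper does not define $\Phi$ by recursion on rank; it defines $\Phi(T) = \widetilde{\meps x{A(x)}}$ whenever $T$ is \emph{represented by} some formula $A(x)$, i.e., $T = \{\widetilde t : A(t) \in \Gamma^*\}$, and lets $\Phi$ be arbitrary elsewhere. This sidesteps a genuine well-foundedness worry in your recursion: rank is measured by \emph{subordinate} \eps-terms (those with $x$ free), so an \eps-term nested in $\meps x{A(x)}$ but not containing $x$ free does not raise the rank, and its value is not guaranteed to have been fixed at an earlier stage of a rank recursion. Second, your quotient should be by $t \approx u$ iff $t = u \in \Gamma^*$ (which the $(=_2)$ axioms make a congruence), rather than by a separately defined Leibniz indiscernibility relation; the two coincide here, but the former is what the truth lemma's atomic case needs directly.
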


\begin{theorem}[Completeness]
  If $\Gamma \cup \{A\}$ are sentences and $\Gamma \models
  A$, then $\Gamma \proves{\eps\ext} A$.
\end{theorem}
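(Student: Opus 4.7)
The plan is to prove completeness by contraposition through the preceding model-existence lemma. Suppose $\Gamma \nproves{\eps\ext} A$. The analogue of Corollary~\ref{incons} for $\ECex$ (Lemma~\ref{ded-lemma} is proved by induction on proofs and goes through unchanged for any axiomatic extension of $\EC$) gives that $\Gamma \cup \{\lnot A\}$ is $\ECex$-consistent, and since every axiom of $\ECe$ is also an axiom of $\ECex$, \emph{a fortiori} $\Gamma \cup \{\lnot A\} \nproves{\eps} \bot$. The preceding lemma then supplies a structure $\M$ and an extensional choice function $\Phi$ with $\M, \Phi \models \Gamma \cup \{\lnot A\}$, so that $\M, \Phi \models \Gamma$ and $\M, \Phi \not\models A$, contradicting $\Gamma \models A$.

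The reason this argument yields completeness for $\ECex$ even though the lemma is phrased in terms of $\ECe$-consistency is that the semantics of Definition~\ref{ext-sat} is intrinsically extensional. Since $\Phi$ is defined on $\wp(\card\M)$, whenever $A(x)$ and $B(x)$ pick out the same subset $\val\M\Phi{s}{A(x)} = \val\M\Phi{s}{B(x)}$ of the domain, the \eps-terms $\meps x{A(x)}$ and $\meps x{B(x)}$ automatically receive the same value under~$\Phi$. Thus every model in this semantics validates the (ext) axiom, and the model produced by the lemma is already an $\ECex$-model, not merely an $\ECe$-model.

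The substantive work lives in the preceding lemma itself, which I would establish by a Henkin-style term-model construction: extend $\Gamma$ to a maximal $\ECe$-consistent set of sentences, take the domain to consist of equivalence classes of closed terms modulo provable equality, and interpret each $\meps x{A(x)}$ by its own class. Clause~\ref{epsilon-sat} of Definition~\ref{ext-sat} then forces $\Phi$ of the set defined by $A(x)$ to be the class of $\meps x{A(x)}$, and the critical formulas guarantee $\Phi(X) \in X$ whenever $X \neq \emptyset$. The principal obstacle is the well-definedness of $\Phi$ on all of $\wp(\card\M)$: two non-equivalent formulas may carve out the same subset of the term model, so their \eps-terms must be identified, and in the purely intensional calculus $\ECe$ this identification is not automatic from the axioms and must be arranged by a careful choice of quotient before the standard truth lemma for the \eps-clause can be verified.
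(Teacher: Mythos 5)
Your first paragraph is the right argument, and is in fact a cleaner rendering of what the paper intends than the paper's own (somewhat garbled) proof: contrapose, use Corollary~\ref{incons} to pass from $\Gamma \nproves{\eps\ext} A$ to the consistency of $\Gamma \cup \{\lnot A\}$, invoke the model-existence lemma to get $\M, \Phi \models \Gamma \cup \{\lnot A\}$, and conclude $\Gamma \not\models A$.

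However, your second and third paragraphs contain a genuine error. The model-existence lemma \emph{cannot} hold for merely $\ECe$-consistent sets with respect to the extensional semantics, and your own observation shows why: every pair $\langle \M, \Phi\rangle$ validates (ext), yet the negation of an instance of (ext) is $\ECe$-consistent (this is precisely why the paper later introduces a separate intensional semantics for $\ECe$). Hence an $\ECe$-consistent set need not have an extensional model, and your step ``\emph{a fortiori} $\Gamma \cup \{\lnot A\} \nproves{\eps} \bot$, now apply the lemma'' is routing through a false statement. Correspondingly, the identification of $\meps x{A(x)}$ and $\meps x{B(x)}$ in the term model when they are coextensive cannot be ``arranged by a careful choice of quotient'': if the maximal consistent set contains $\lnot(\meps x{A(x)} = \meps x{B(x)})$ while forcing the two formulas to define the same subset of the domain, no quotient compatible with clause~\ref{epsilon-sat} can satisfy it. The well-definedness of $\Phi$ in the Henkin construction is exactly where (ext) is needed (the paper's proof that $\Phi$ is a well-defined choice function says ``use (ext)'' explicitly). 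The repair is simply to read the lemma with $\ECex$-consistency as its hypothesis; that is all your proof of the theorem requires, since the set you feed it, $\Gamma \cup \{\lnot A\}$, is $\ECex$-consistent. The weakening to $\ECe$-consistency is the step that breaks.
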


\begin{proof}
  Suppose $\Gamma \not\models A$. Then for some $\M$, $\Phi$ we have
  $\M, \Phi \models \Gamma$ but $\M, \Phi \not\models A$. Hence $\M,
  \Phi \models \Gamma \cup \{\lnot A\}$.  By the Lemma, $\Gamma \cup
  \{\lnot A\} \proves{\eps} \bot$. By Corollary~\ref{incons}, $\Gamma
  \proves{\eps} A$.
\end{proof}

The proof of the Lemma comes in several stages.  We have to show that
if $\Gamma$ is consistent, we can construct $\M$, $\Phi$, and $s$ so
that $\sat \M \Phi s \Gamma$. Since $\FV(\Gamma) = \emptyset$, we then
have $\M, \Phi \models \Gamma$.

\begin{lemma}
  If $\Gamma \nproves{\eps} \bot$, there is $\Gamma^* \supseteq
  \Gamma$ with (1) $\Gamma^* \nproves{\eps} \bot$ and (2) for all
  formulas $A$, either $A \in \Gamma^*$ or $\lnot A \in \Gamma^*$.
\end{lemma}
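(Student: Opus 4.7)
The plan is the standard Lindenbaum-style maximal extension. Since the language of $\Lea$ is countable, I enumerate all sentences as $A_0, A_1, A_2, \ldots$ and build an increasing chain $\Gamma = \Gamma_0 \subseteq \Gamma_1 \subseteq \cdots$ by setting $\Gamma_{i+1} = \Gamma_i \cup \{A_i\}$ when this remains \eps-consistent, and $\Gamma_{i+1} = \Gamma_i \cup \{\lnot A_i\}$ otherwise. Finally, let $\Gamma^* = \bigcup_i \Gamma_i$.

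The heart of the argument is showing that each $\Gamma_i$ remains consistent. Inductively every $\Gamma_i$ is a set of sentences, so the Deduction Theorem~\ref{deduction-thm} and Corollary~\ref{incons} both apply. Supposing inductively that $\Gamma_i \nproves{\eps} \bot$, I show that at least one of $\Gamma_i \cup \{A_i\}$, $\Gamma_i \cup \{\lnot A_i\}$ is consistent: if both were inconsistent, the Deduction Theorem would give $\Gamma_i \proves{\eps} A_i \lif \bot$ and $\Gamma_i \proves{\eps} \lnot A_i \lif \bot$; propositional reasoning (via the tautologies $(A_i \lif \bot) \lif \lnot A_i$ and $(\lnot A_i \lif \bot) \lif A_i$) then yields $\Gamma_i \proves{\eps} \lnot A_i$ together with $\Gamma_i \proves{\eps} A_i$, so one more modus ponens on the tautology $A_i \lif (\lnot A_i \lif \bot)$ gives $\Gamma_i \proves{\eps} \bot$, contradicting the induction hypothesis.

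The remaining verifications are bookkeeping. $\Gamma \subseteq \Gamma^*$ holds by construction; $\Gamma^*$ is consistent because any purported proof of $\bot$ from it uses only finitely many premises, all of which therefore lie in a single $\Gamma_n$ that is consistent by induction; and for every sentence $A$, enumerated as some $A_i$, at least one of $A$ and $\lnot A$ is placed in $\Gamma_{i+1} \subseteq \Gamma^*$. The one subtle point to watch is that the lemma as stated quantifies over all \emph{formulas}, not just sentences, whereas Corollary~\ref{incons} and Theorem~\ref{deduction-thm} are stated only for sentences. I expect to handle this either by restricting the enumeration to sentences (which is all the Henkin-style construction of $\M$ and $\Phi$ in the sequel actually needs) or, for the literal formulation, by enumerating all formulas and invoking Lemma~\ref{ded-lemma} with suitable renaming of bound variables to avoid clashes with the eigenvariables of the relevant proofs. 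The presence of $\eps$-terms and critical formulas plays no role in this maximalization step, so no genuinely new obstacle arises beyond this technicality.
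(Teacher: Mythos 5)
Your proof is correct and follows essentially the same Lindenbaum construction as the paper: the same chain $\Gamma_0 \subseteq \Gamma_1 \subseteq \cdots$, the same finiteness argument for the consistency of the union, and the same Deduction-Theorem argument (up to a choice of intermediate tautologies) that at least one of $\Gamma_n \cup \{A_n\}$, $\Gamma_n \cup \{\lnot A_n\}$ remains consistent. The sentence-versus-formula subtlety you flag is genuine but is in fact glossed over by the paper's own proof, which enumerates all of $\Frm_\eps$ and invokes the Deduction Theorem without comment; either of your proposed repairs (restricting the enumeration to sentences, or appealing to Lemma~\ref{ded-lemma} with care about eigenvariables) is adequate.
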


\begin{proof}
  Let $A_1$, $A_2$, \dots\ be an enumeration of $\Frm_\eps$. Define
  $\Gamma_0 = \Gamma$ and \[ \Gamma_{n+1} =
\begin{cases}
  \Gamma_n \cup \{A_n\} & 
  \text{if $\Gamma_n \cup \{A_n\} \nproves{\eps} \bot$} \\
  \Gamma_n \cup \{\lnot A_n\} & 
  \text{if $\Gamma_n \cup \{\lnot A_n\}
    \nproves{\eps} \bot$ otherwise}
\end{cases}
\]
Let $\Gamma^* = \bigcup_{n\ge 0} \Gamma_n$. Obviously, $\Gamma
\subseteq \Gamma^*$. For (1), observe that if $\Gamma^*
\proves[\pi]{\eps} \bot$, then $\pi$ contains only finitely many
formulas from~$\Gamma^*$, so for some $n$, $\Gamma_n
\proves[\pi]{\eps} \bot$. But $\Gamma_n$ is consistent by definition.

To verify (2), we have to show that for each $n$, either $\Gamma_n
\cup \{A_n\} \nproves{\eps} \bot$ or $\Gamma_n \cup \{\lnot A\}
\nproves{\eps} \bot$. For $n = 0$, this is the assumption of the lemma.
So suppose the claim holds for $n-1$.  Suppose $\Gamma_n \cup \{A\}
\proves[\pi]{\eps} \bot$ and $\Gamma_n \cup \{\lnot A\}
\proves[\pi']{\eps} \bot$. Then by the Deduction Theorem, we have
$\Gamma_n \proves[{\pi[A]}] A \lif \bot$ and $\Gamma_n
\proves[{\pi'[A']}] \lnot A \lif \bot$. Since $(A \lif \bot) \lif
((\lnot A \lif \bot) \lif \bot)$ is a tautology, we have $\Gamma_n
\proves{\eps} \bot$, contradicting the induction hypothesis.
\end{proof}

\begin{lemma}\label{lem-closed}
If $\Gamma^* \proves{\eps} B$, then $B \in \Gamma^*$.
\end{lemma}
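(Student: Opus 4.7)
The plan is to argue by contradiction, leveraging the two properties of $\Gamma^*$ established in the previous lemma: consistency (property~(1)) and maximality (property~(2)).

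First I would assume for contradiction that $\Gamma^* \proves{\eps} B$ but $B \notin \Gamma^*$. Applying property~(2) of $\Gamma^*$ to the formula $B$, since $B \notin \Gamma^*$ we must have $\lnot B \in \Gamma^*$. Consequently the one-line proof consisting of $\lnot B$ itself witnesses $\Gamma^* \proves{\eps} \lnot B$.

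Next I would combine the two derivations using propositional logic. The formula $B \lif (\lnot B \lif \bot)$ is a tautology, so it is an axiom of \ECe{}. Two applications of modus ponens, first with the assumed proof of $B$ and then with the proof of $\lnot B$ from $\Gamma^*$, yield $\Gamma^* \proves{\eps} \bot$. This directly contradicts property~(1) of~$\Gamma^*$, so no such $B$ exists and the lemma follows.

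There is no real obstacle here: this is the standard ``deductive closure of a maximally consistent set'' argument, and the only thing to verify is that the formula $B$ in question is actually in the range of the enumeration used to build $\Gamma^*$. That is immediate, since the enumeration $A_1, A_2, \dots$ ranges over all of $\Frm_\eps$ and $B$ is a formula of the \eps-calculus (it appears on the last line of a proof in \ECe{}). Hence property~(2) is applicable to~$B$, and the argument goes through.
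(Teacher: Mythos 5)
Your argument is correct and is exactly the paper's proof of Lemma~\ref{lem-closed}: by maximality $\lnot B \in \Gamma^*$, and combining this with the assumed derivation of $B$ via the tautology $B \lif (\lnot B \lif \bot)$ and modus ponens contradicts the consistency of~$\Gamma^*$. You have simply spelled out the propositional step that the paper leaves implicit.
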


\begin{proof}
  If not, then $\lnot B \in \Gamma^*$ by maximality, so $\Gamma^*$
  would be inconsistent.
\end{proof}

\begin{defn}
  Let $\approx$ be the relation on $\Trm_\eps$ defined by 
\[ 
t \approx u \text{ iff } t = u \in \Gamma^*
\]
It is easily seen that $\approx$ is an equivalence relation.  Let
$\widetilde t = \{u : u \approx t\}$ and $\widetilde \Trm =
\{\widetilde t : t \in \Trm\}$.
\end{defn}

\begin{defn}
A set $T \in \widetilde \Trm$ is \emph{represented by $A(x)$}
if $T = \{\widetilde t : A(t) \in \Gamma^*\}$.

Let $\Phi_0$ be a fixed choice function on $\widetilde \Trm$, and define
\[
\Phi(T) = \begin{cases}
\widetilde{\meps x {A(x)}} & \text{if $T$ is represented by $A(x)$}\\
\Phi_0(T) & \text{otherwise.}
\end{cases}
\]
\end{defn}

\begin{proposition}
$\Phi$ is a well-defined choice function on $\widetilde \Trm$.
\end{proposition}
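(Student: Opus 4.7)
The plan is to verify two things: (i) well-definedness, namely that if $T$ is represented by both $A(x)$ and $B(x)$ then $\widetilde{\meps x{A(x)}} = \widetilde{\meps x{B(x)}}$; and (ii) that $\Phi$ really is a choice function, i.e.\ $\Phi(T) \in \widetilde\Trm$ in all cases and $\Phi(T) \in T$ whenever $T \neq \emptyset$.

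For (i), suppose $T = \{\widetilde t : A(t) \in \Gamma^*\} = \{\widetilde t : B(t) \in \Gamma^*\}$, so that $A(t) \in \Gamma^*$ iff $B(t) \in \Gamma^*$ for every term $t$. Setting $t_0 \seq \meps x{\lnot(A(x) \liff B(x))}$, maximality of $\Gamma^*$ forces $A(t_0) \liff B(t_0) \in \Gamma^*$ (either $A(t_0),B(t_0)$ are both in $\Gamma^*$, or their negations are, and in either case the biconditional follows tautologically). But $A(t_0) \liff B(t_0)$ is exactly the antecedent of the (ext) axiom instance $A(t_0) \liff B(t_0) \lif \meps x{A(x)} = \meps x{B(x)}$, and that axiom is itself in $\Gamma^*$ by Lemma~\ref{lem-closed}. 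Modus ponens then yields $\meps x{A(x)} = \meps x{B(x)} \in \Gamma^*$, i.e.\ $\widetilde{\meps x{A(x)}} = \widetilde{\meps x{B(x)}}$.

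For (ii), both clauses of the definition obviously produce an element of $\widetilde\Trm$. If $T \neq \emptyset$ is represented by $A(x)$, pick any $\widetilde t \in T$, so that $A(t) \in \Gamma^*$. The critical formula $A(t) \lif A(\meps x{A(x)})$ is an axiom of $\ECe$, hence lies in $\Gamma^*$ by Lemma~\ref{lem-closed}, and modus ponens delivers $A(\meps x{A(x)}) \in \Gamma^*$, which is precisely the assertion $\widetilde{\meps x{A(x)}} \in T$. If $T \neq \emptyset$ is not represented by any formula, $\Phi(T) = \Phi_0(T) \in T$ holds directly because $\Phi_0$ is a choice function.

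The delicate step is (i): in the purely intensional calculus $\ECe$, two formulas can pick out the same set of terms modulo $\Gamma^*$ without their $\eps$-terms being forced to coincide, so the two-case definition of $\Phi$ would be genuinely ambiguous. It is precisely the extensionality axiom that converts pointwise equivalence in $\Gamma^*$ into identity of the corresponding $\eps$-terms, and this is what forces the ambient calculus underlying the construction to be $\ECex$ rather than merely $\ECe$.
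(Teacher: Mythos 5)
Your proof is correct and is exactly the expansion of the paper's one-line argument, which says only ``use (ext) for well-definedness and (crit) for the choice-function property''; your identification of $t_0 \seq \meps x{\lnot(A(x)\liff B(x))}$ as the witness making the antecedent of (ext) available in $\Gamma^*$ is the intended point, as is your closing remark that this is where the construction genuinely needs $\ECex$ rather than $\ECe$. The only step you pass over silently is that equality of the sets $\{\widetilde t : A(t)\in\Gamma^*\}$ and $\{\widetilde t : B(t)\in\Gamma^*\}$ yields $A(t)\in\Gamma^*$ iff $B(t)\in\Gamma^*$ for every individual term $t$ only via $(=_2)$ and the deductive closure of $\Gamma^*$ (from $\widetilde t = \widetilde u$ and $B(u)\in\Gamma^*$ infer $B(t)\in\Gamma^*$), which is worth one explicit line.
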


\begin{proof}
  Use (ext) for well-definedness and (crit) for choice function.
\end{proof}

Now let $\M =\langle \widetilde \Trm, (\cdot)^\M\rangle$ with $c^\M =
\widetilde c$, $(P_i^n)^\M = \{\langle\widetilde t_1, \dots,
\widetilde t_1\rangle : P_i^n(t_1, \ldots, t_n)\}$, and let $s(x) =
\widetilde s$.

\begin{proposition}
  $\sat \M \Phi s {\Gamma^*}$.
\end{proposition}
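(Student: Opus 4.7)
The plan is to deduce $\sat\M\Phi s{\Gamma^*}$ from two auxiliary claims proved by simultaneous induction: \textbf{(T)} $\val\M\Phi s t = \widetilde t$ for every $t \in \Trm_\eps$; and \textbf{(F)} $\sat\M\Phi s A$ iff $A \in \Gamma^*$ for every $A \in \Frm_\eps$. Since $\Gamma \subseteq \Gamma^*$, the proposition is immediate from (F).

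For (T), the variable case follows from $s(x) = \widetilde x$, and function-application terms reduce to the inductive hypothesis together with the canonical interpretation $f^\M(\widetilde{t_1},\dots,\widetilde{t_n}) := \widetilde{f(t_1,\dots,t_n)}$, which is well-defined by $(=_2)$ applied inside $\Gamma^*$. The essential case is $t = \meps x {A(x)}$. By clause~\ref{epsilon-sat} of Definition~\ref{ext-sat}, $\val\M\Phi s t = \Phi(Y)$ where $Y = \{T \in \widetilde\Trm : \sat\M\Phi{\st s x T}{A(x)}\}$. For $T = \widetilde u$, the Substitution Lemma together with the inductive hypothesis (T) for $u$ gives $\sat\M\Phi{\st s x {\widetilde u}}{A(x)}$ iff $\sat\M\Phi s{A(u)}$, which by the inductive hypothesis (F) for $A(u)$ is equivalent to $A(u) \in \Gamma^*$. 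Hence $Y$ is precisely the set represented by $A(x)$, and the definition of $\Phi$ yields $\Phi(Y) = \widetilde{\meps x {A(x)}}$.

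For (F), atomic and equality formulas reduce to (T) combined with the definitions of $(P^n_i)^\M$ and $\approx$, using Lemma~\ref{lem-closed} to move between membership in $\Gamma^*$ and provability from it. The Boolean cases are handled via the maximality, consistency, and propositional closure of $\Gamma^*$: for example, $\sat\M\Phi s{\lnot A}$ iff $A \notin \Gamma^*$ (by IH) iff $\lnot A \in \Gamma^*$ (by maximality), and analogously for $\land,\lor,\lif,\liff$ via the relevant tautologies.

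The principal subtlety lies in justifying the simultaneous induction in the $\eps$-term case: when passing from $\meps x {A(x)}$ to $A(u)$ for a substituted term $u$, the formula $A(u)$ may contain $\eps$-terms inherited from $u$ that have no ancestor in $A(x)$, so a naive structural measure need not decrease. The standard resolution is a lexicographic well-founded measure keyed first on the rank of $\eps$-subterms and then on symbol count; subordinate $\eps$-terms of $\meps x {A(x)}$ have strictly smaller rank, while those of $u$ are dispatched at a prior inductive stage via the Substitution Lemma. Once this bookkeeping is settled, the remaining cases are entirely routine, and the construction parallels the classical Henkin term-model proof.
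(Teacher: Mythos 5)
Your overall architecture --- the simultaneous claims (T) and (F), the treatment of atoms via the definition of $(P^n_i)^\M$, the Boolean cases via maximality and deductive closure of $\Gamma^*$, and the identification of $\val\M\Phi s{A(x)}$ with the set represented by $A(x)$ so that the definition of $\Phi$ yields $\widetilde{\meps x{A(x)}}$ --- is exactly the paper's. The gap lies in the one place where you rightly sense trouble: the justification of the induction in the $\eps$-case. Your route passes through $A(u)$ via the Substitution Lemma, and so needs the induction hypotheses (T) for $u$ and (F) for $A(u)$ for \emph{every} term $u$, since every element of $\card\M = \widetilde\Trm$ is some $\widetilde u$ and must be tested for membership in $\val\M\Phi s{A(x)}$. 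The lexicographic measure you propose (maximal rank of $\eps$-subterms, then symbol count) does not decrease at this step: $u$ is arbitrary, so $u$ and hence $A(u)$ may contain $\eps$-terms of strictly larger rank than any $\eps$-term occurring in $\meps x{A(x)}$, and then there is no ``prior inductive stage'' at which they could have been dispatched. Rank cannot bound these, because the problematic $\eps$-terms enter through the substituted parameter, not through subordination.

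The repair is to avoid $A(u)$ altogether: strengthen (T)/(F) to quantify over all assignments of the form $x \mapsto \widetilde{u}$ (equivalently, over all instantiations of the free variables by terms), and in the $\eps$-case apply the strengthened hypothesis to the \emph{subformula} $A(x)$ under the assignments $\st s x {\widetilde u}$. Then ordinary structural induction on the expression suffices, the Substitution Lemma detour is unnecessary, and one obtains directly that $\{m : \sat \M \Phi {\st s x m} {A(x)}\} = \{\widetilde u : A(u) \in \Gamma^*\}$; this is precisely what the paper's phrase ``$\val \M \Phi s {A(x)}$ is represented by $A(x)$ by induction hypothesis'' encodes. Your remaining additions are sound and in fact fill details the paper omits: the clause $f^\M(\widetilde{t_1},\dots,\widetilde{t_n}) = \widetilde{f(t_1,\dots,t_n)}$, well-defined by $(=_2)$, and the explicit appeal to maximality in the negation case.
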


\begin{proof}
  We show that $\val \M \Phi s t = \widetilde t$ and $\sat \M \Phi s
  A$ iff $A \in \Gamma^*$ by simultaneous induction on the complexity
  of $t$ and $A$.

  If $t = c$ is a constant, the claim holds by definition of
  $(\cdot)^\M$.  If $A = \bot$ or $= \top$, the claim holds by
  Lemma~\ref{lem-closed}.

  If $A \equiv P^n(t_1, \ldots, t_n)$, then by induction hypothesis,
  $\val \M \Phi s t_i = \widetilde {t_i}$.  By definition of
  $(\cdot)^\M$, $\langle \widetilde{t_1}, \dots,
  \widetilde{t_n}\rangle \in (P^n_i)(t_1, \dots, t_n)$ iff $P^n_i(t_1,
  \dots, t_n) \in \Gamma^*$.

  If $A\equiv \lnot B$, $(B \land C)$, $(B \lor C)$, $(B \lif C)$, $(B
  \liff C)$, the claim follows immediately from the induction
  hypothesis and the definition of $\models$ and the closure
  properties of $\Gamma^*$. For instance, $\sat \M \Phi s {(B \land
    C)}$ iff $\sat \M \Phi s B$ and $\sat \M \Phi s C$. By induction
  hypothesis, this is the case iff $B \in \Gamma^*$ and $C \in
  \Gamma^*$. But since $B, C \proves{\eps} B \land C$ and $B \land C
  \proves{\eps} B$ and $\proves{\eps} C$, this is the case iff $(B
  \land C) \in \Gamma^*$.  Remaining cases: Exercise.

  If $t \seq \meps x {A(x)}$, then $\val \M \Phi s t = \Phi(\val \M
  \Phi s {A(x)})$. Since $\val \M \Phi s {A(x)} $ is represented by
  $A(x)$ by induction hypothesis, we have $\val \M \Phi s t =
  \widetilde{\meps x {A(x)}}$ by definition of $\Phi$.
\end{proof}

\subsection{Semantics for \ECe}

In order to give a complete semantics for \ECe, i.e., for the calculus
without the extensionality axiom~(ext), it is necessary to change the
notion of choice function so that two \eps-terms \meps x {A(x)} and
\meps x {B(x)} may be assigned different representatives even when
$\sat \M \Phi s {\forall x(A(x) \liff B(x))}$, since then the negation
of (ext) is consistent in the resulting calculus.  The idea is to add
the \eps-term itself as an additional argument to the choice function.
However, in order for this semantics to be sound for the
calculus---specifically, in order for ($=_2$) to be valid---we have to
use not \eps-terms but \eps-types.

\begin{defn}
  An \emph{intensional choice operator} is a mapping $\Psi\colon \Typ
  \times \card{\M}^{<\omega} \to \card{\M}^{\wp(\card{\M})}$ such that
  for every type $p = \meps x{A(x; y_1, \dots, y_n)}$ is a type, and
  $m_1$, \dots,~$m_n \in \card{\M}$, $\Psi(p, m_1, \dots, m_n)$ is a
  choice function.
\end{defn}

\begin{defn}
  If $\M$ is a structure, $\Psi$ an intensional choice operator, and
  $s$ an assignment, $\val \M \Psi s t$ and $\sat \M \Psi s A$ is
  defined as before, except (\ref{epsilon-sat}) in Definition~\ref{ext-sat} is
  replaced by:
  \begin{enumerate}
  \item[($\ref{epsilon-sat}'$)] $\val \M \Psi s {\meps x{A(x)}} =
    \Psi(p, m_1, \dots, m_n)(\val \M \Phi s {A(x)})$ where 
    \begin{enumerate}
    \item $p = \meps x{A'(x; x_1, \dots, x_n)}$ is the type of $\meps
      x {A(x)}$,
    \item $t_1$, \dots, $t_n$ are the subterms corresponding to $x_1$,
      \dots, $x_n$, i.e., $\meps x{A(x)} \seq \meps x{A'(x; t_1,
        \dots, t_n)}$, 
    \item $m_i = \val \M \Psi s t_1$, and 
    \item
    $\val \M \Phi s {A(x)} = 
    \{ m \in \card\M : \sat \M \Psi {\st s x m} A(x)\}$
    \end{enumerate}
\end{enumerate}
\end{defn}

The soundness and completeness proofs generalize to \ECe, \ECeq,
and~\PCe.

\section{The First Epsilon Theorem}\label{epsthm}

\subsection{The Case Without Identity}

\begin{defn}
  An \eps-term $e$ is \emph{critical in~$\pi$} if $A(t) \lif A(e)$ is
  one of the critical formulas in $\pi$.  The \emph{rank~$\rk{\pi}$ of
    a proof~$\pi$} is the maximal rank of its critical \eps-terms. The
  \emph{$r$-degree~$\deg{\pi, r}$} of $\pi$ is the maximum degree of
  its critical \eps-terms of rank~$r$. The \emph{$r$-order~$o(\pi,
    r)$} of $\pi$ is the number of different (up to renaming of bound
  variables) critical \eps-terms of rank~$r$.
\end{defn}

\begin{lemma}
  If $e = \meps x{A(x)}$, $\meps y {B(y)}$ are critical in $\pi$,
  $\rk{e} = \rk{\pi}$, and $B^* \equiv B(u) \lif B(\meps y{B(y)})$ is a
  critical formula in~$\pi$. Then, if $e$ is a subterm of $B^*$, it is
  a subterm of $B(y)$ or a subterm of~$u$.
\end{lemma}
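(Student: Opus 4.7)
The plan is to analyze, position by position, where $e$ can occur as a subterm of $B^* \equiv B(u) \lif B(\meps y B(y))$, and then rule out the one genuinely problematic case by a rank argument. First I would observe that any subterm occurrence of $e$ in $B^*$ must lie either within $B(u)$ or within $B(\meps y B(y))$, allowing for the possibility that $e \equiv \meps y B(y)$ itself. Since $B(s)$ (for $s \in \{u, \meps y B(y)\}$) arises by substituting $s$ for the free occurrences of $y$ in $B(y)$, each such subterm occurrence of $e$ in $B(s)$ falls into one of three kinds: (i) $e$ lies entirely within one of the substituted copies of $s$; (ii) $e$ coincides with a subterm $e_0$ of $B(y)$ with $y \notin \FV(e_0)$, so $e_0$ is untouched by the substitution; or (iii) $e$ arises as $e_0[y/s]$ for some subterm $e_0$ of $B(y)$ with $y \in \FV(e_0)$.

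The easy cases go away directly. Case (i) with $s = u$ says $e$ is a subterm of $u$; case (i) with $s = \meps y B(y)$ makes $e$ either the outermost critical $\eps$-term (which we treat separately) or a subterm of $B(y)$; case (ii) gives $e$ as a subterm of $B(y)$ outright. The content of the lemma is therefore the exclusion of case (iii).

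To rule out case (iii), I would first note that $e_0$ is an $\eps$-term occurring in $B(y)$ with $y \in \FV(e_0)$, so by the definition of subordination $e_0$ is subordinate to $\meps y B(y)$; hence $\rk{e_0} < \rk{\meps y B(y)} \le \rk{\pi}$, the second inequality holding because $\meps y B(y)$ is critical in $\pi$. The key step is then to show $\rk{e} = \rk{e_0}$: the substitution $e_0[y/s]$ only replaces the immediate-subterm position(s) occupied by $y$ (or by the maximal subterms of $e_0$ containing $y$), leaving the underlying $\meps x A(x; x_1, \dots, x_n)$-skeleton intact, so $e_0$ and $e$ share a type up to renaming. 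Invoking the earlier proposition that the rank of an $\eps$-term equals the rank of its type, one concludes $\rk{e} = \rk{e_0}$, whence $\rk{e} < \rk{\pi}$, contradicting $\rk{e} = \rk{\pi}$.

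The main obstacle is justifying the invariance $\rk{e} = \rk{e_0}$ carefully: one must verify that substituting a term for a free variable of an $\eps$-term preserves its type, and in particular that no $\eps$-subterms sitting inside $s$ become newly subordinate to the outer binder $\eps_x$ of $e$. This is handled by the standard bound-variable convention that $x \notin \FV(s)$, ensured by renaming if necessary, together with tracking how immediate-subterm positions of $e_0$ transform under the substitution. Once this is in hand, the rank inequality closes the argument.
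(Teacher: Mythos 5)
Your proof is correct and follows essentially the same route as the paper's: reduce to the case where $e$ arises by substituting $u$ or $\meps y{B(y)}$ for $y$ in a subterm $e_0$ of $B(y)$ with $y \in \FV(e_0)$, note that $e$ and $e_0$ share a type and hence a rank, and derive a contradiction from $\rk{e_0} < \rk{\meps y{B(y)}} \le \rk{\pi}$ via subordination. Your case analysis and the remark on type-invariance under substitution are just somewhat more explicit than the paper's one-line version of the same argument.
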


\begin{proof}
  Suppose not. Since $e$ is a subterm of $B^*$, we have $B(y)
  \seq B'(\meps x{A'(x, y)}, y)$ and either $e \seq \meps x{A'(x, u)}$
  or $e \seq \meps x{A'(x, \meps y{B(y)})}$. In each case, we see that
  $\meps x{A'(x, y)}$ and $e$ have the same rank, since the latter is
    an instance of the former (and so have the same type). On the
    other hand, in either case, $\meps y {B(y)}$ would be
  \[
  \meps y {B'(\meps x{A'(x, y)}, y)}
  \]
  and so would have a higher rank than $\meps x{A'(x, y)}$ as that
  \eps-term is subordinate to it.  This contradicts $\rk{e} = \rk{\pi}$.
\end{proof}

\begin{lemma}
  Let $e$, $B^*$ be as in the lemma, and $t$ be any term. Then 
  \begin{enumerate}
  \item If $e$ is not a subterm of $B(y)$, $\ST{B^*}{e}{t} \equiv
    B(u') \lif B(\meps y{B(y)})$.
  \item If $e$ is a subterm of $B(y)$, i.e., $B(y) \seq B'(e, y)$,
    $\ST{B^*}{e}{t} \equiv B'(t, u') \lif B'(t, \meps{y}{B'(t, y)})$.
  \end{enumerate}
\end{lemma}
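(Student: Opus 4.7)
The plan is to prove the statement by a careful syntactic case analysis, tracking every subterm occurrence of $e$ in $B^*$ and computing the result of $\ST{B^*}{e}{t}$ piecewise. The previous lemma is the key ingredient: it tells us that any subterm occurrence of $e$ in $B^* \equiv B(u) \lif B(\meps y{B(y)})$ must either lie inside $B(y)$ or inside $u$, which drastically limits where we need to look.

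For Case 1, the assumption that $e$ is not a subterm of $B(y)$ removes the first possibility, so every occurrence of $e$ in $B^*$ must lie inside the term $u$. In particular, $e$ is absent from $B(y)$, and hence from $\meps y{B(y)}$ as well, since the only proper subterms of $\meps y{B(y)}$ are subterms of $B(y)$. Applying the substitution therefore alters only $u$, yielding $u' \equiv \ST{u}{e}{t}$, and leaves the rest of $B^*$ untouched; the result is $B(u') \lif B(\meps y{B(y)})$, as claimed.

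For Case 2, the hypothesis $B(y) \equiv B'(e, y)$ exposes every occurrence of $e$ inside $B(y)$; together with possible occurrences of $e$ inside $u$, this lets us rewrite $B^*$ explicitly as $B'(e, u) \lif B'(e, \meps y{B'(e, y)})$. Substituting $t$ for each such $e$ and setting $u' \equiv \ST{u}{e}{t}$ gives the desired $B'(t, u') \lif B'(t, \meps y{B'(t, y)})$. The only delicate step is the substitution crossing the $\eps_y$-binder on the right: one must verify that the occurrences of $e$ appearing in $B'(e, y)$ remain genuine subterm occurrences under the convention of the paper's earlier footnote, i.e.\ that no variable free in $e$ gets captured by $\eps_y$. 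Concretely, this reduces to $y \notin \FV(e)$, which we may assume since $e$ is a fixed critical $\eps$-term in $\pi$ and its bound variables can be renamed apart from $y$. The main obstacle is therefore not computational but conventional: it is making sure the syntactic bookkeeping around $\ST{\cdot}{\cdot}{\cdot}$ across binders is correctly invoked. Once it is, both identities follow by inspection.
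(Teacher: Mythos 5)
Your proof is correct and is essentially the argument the paper leaves implicit: the preceding lemma localizes every subterm occurrence of $e$ in $B^*$ to the substituted copies of $u$ or to $B(y)$, and both cases then follow by carrying out $\ST{B^*}{e}{t}$ occurrence by occurrence. One small repair to your justification of $y \notin \FV(e)$ in Case~2: renaming the bound variables \emph{of $e$} cannot remove $y$ from $\FV(e)$; instead either rename the bound variable $y$ of $\meps y{B(y)}$ away from $\FV(e)$, or observe that $y \in \FV(e)$ would make $e$ subordinate to $\meps y{B(y)}$, so that $\rk{\meps y{B(y)}} > \rk{e} = \rk{\pi}$, contradicting the maximality of $\rk{e}$ among the critical $\eps$-terms of~$\pi$.
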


\begin{lemma}
  If $\proves[\pi]{\eps} E$ and $E$ does not contain \eps, then there
  is a proof $\pi'$ such that $\proves[\pi']{\eps} E$ and $\rk{\pi'}
  \le \rk{pi} = r$ and $o(\pi', r) < o(\pi, r)$.
\end{lemma}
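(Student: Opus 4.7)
The plan is to eliminate one rank-$r$ critical $\eps$-term from $\pi$ via a Bernays-style substitution-combination argument. First I choose $e \equiv \meps x{A(x)}$ to be \emph{outermost} among the rank-$r$ critical $\eps$-terms of $\pi$, i.e., not a proper subterm of any other rank-$r$ critical $\eps$-term; such an $e$ exists because $\pi$ has only finitely many critical $\eps$-terms. Let $C_1, \dots, C_k$ with $C_i \equiv A(t_i) \lif A(e)$ enumerate all critical formulas of $\pi$ whose main $\eps$-term is~$e$.

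Next I define $\pi_i := \ST{\pi}{e}{t_i}$ for $i = 1, \dots, k$ and a ``default'' proof $\pi_0 := \ST{\pi}{e}{x_0}$ for a variable $x_0$ fresh to $\pi$. By the previous two lemmas, every critical formula of $\pi$ whose main $\eps$-term is not $e$ transforms into a critical formula of $\pi_i$: under case~(1) the main $\eps$-term is unchanged, and under case~(2) it becomes a new $\meps y{B'(t_i, y)}$. The outermost choice of $e$ prevents case~(2) from applying when $\meps y{B(y)}$ has rank $r$, so every newly introduced main $\eps$-term has rank strictly less than $r$ (after alpha-renaming $y$ to avoid $\FV(t_i)$). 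Meanwhile the critical formula $C_i$ becomes the tautology $A(t_i) \lif A(t_i)$ in $\pi_i$, and the remaining $C_j$ ($j \neq i$) turn into formulas of the form $A(t_j^{(i)}) \lif A(t_i)$ which are propositional consequences of $A(t_i)$; similarly in $\pi_0$ each $C_j$ becomes $A(t_j^{(0)}) \lif A(x_0)$, propositionally following from $\lnot A(t_j^{(0)})$.

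I then combine the $\pi_i$'s into one proof $\pi'$ of $E$ by case analysis. Using a propositional tautology of the form $P_1 \lor \dots \lor P_k \lor (\lnot P_1 \land \dots \land \lnot P_k)$, where the $P_i$ are the relevant instances of $A(t_i)$ matched to the respective $\pi_i$, I split into two cases: if some $P_i$ holds, the defective formulas of $\pi_i$ follow propositionally from $P_i$; if none does, the defective formulas of $\pi_0$ follow from $\bigwedge_j \lnot P_j$. Because $E$ is $\eps$-free it is invariant under all substitutions, so each $\pi_i$ still ends in $E$; the Deduction Theorem together with the tautology above discharges the case hypotheses to yield $\pi'$.

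Finally I verify the bounds. By the outermost choice of $e$, the substitutions introduce no new rank-$r$ critical $\eps$-terms, and $e$ itself is no longer a critical $\eps$-term of any $\pi_i$ (its critical formulas having become tautologies or propositional consequences). Hence $o(\pi', r) \le o(\pi, r) - 1$ and $\rk{\pi'} \le r$. The principal obstacle is precisely this rank control: without the outermost restriction on $e$, case~(2) applied to rank-$r$ critical formulas would produce $k{+}1$ distinct rank-$r$ main $\eps$-terms $\meps y{B'(t_i, y)}$ across $\pi_0, \dots, \pi_k$, inflating rather than shrinking $o(\pi', r)$.
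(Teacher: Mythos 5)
Your overall strategy is the paper's own (Bernays-style) one: pick a suitable rank-$r$ critical term $e$, substitute each witness term $t_i$ for it, turn the critical formulas belonging to $e$ into propositional consequences of case hypotheses $A(t_i)$, and recombine by a case distinction discharged via the Deduction Theorem. Your explicit choice of $e$ as \emph{outermost} among the rank-$r$ critical terms is exactly the condition needed so that case~(2) of the preceding lemma never applies to a rank-$r$ critical term; the paper leaves this choice implicit (maximal degree among the rank-$r$ critical terms would serve the same purpose), so making it explicit is a genuine improvement. Your rank bookkeeping for the new terms $\meps y{B'(t_i,y)}$ is also correct.

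There is, however, a gap in your default branch, stemming from overlooking that $e$ may occur \emph{inside} the witness terms $t_j$ (nothing excludes a critical formula such as $A(f(e)) \lif A(e)$). In the positive branches this is harmless: $C_j$ becomes $A(\ST{t_j}{e}{t_i}) \lif A(t_i)$, still a consequence of the original $A(t_i)$ (though note $C_i$ need not become the literal tautology $A(t_i) \lif A(t_i)$). But in $\pi_0 = \ST{\pi}{e}{x_0}$ the formulas you must recover are $A(\ST{t_j}{e}{x_0}) \lif A(x_0)$, and their antecedents are \emph{not} the formulas $A(t_j)$ that appear negated in your combining tautology: $\lnot A(t_j)$ does not propositionally yield $A(\ST{t_j}{e}{x_0}) \lif A(x_0)$ whenever $e$ occurs in $t_j$. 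So the single tautology $P_1 \lor \dots \lor P_k \lor (\lnot P_1 \land \dots \land \lnot P_k)$ cannot serve both the positive and the negative branches with one choice of the $P_j$. The paper avoids this by substituting nothing in the default branch: it keeps $\pi$ itself, with $e$ still present, and derives the \emph{original} critical formulas $A(t_j) \lif A(e)$ from the hypothesis $\lnot\bigvee_i A(t_i)$; the term $e$ then survives in $\pi'$ but is no longer \emph{critical}, which is all that $o(\pi',r)$ counts. Replace your $\pi_0$ by that construction (or otherwise repair the mismatch between $t_j$ and $\ST{t_j}{e}{x_0}$) and your argument goes through.
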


\begin{proof}
  Let $e$ be an \eps-term critical in $\pi$ and let $A(t_1) \lif
  A(e)$, dots, $A(t_n) \lif A(e)$ be all its critical formulas in
  $\pi$.

  Consider $\ST \pi e t_i$, i.e., $\pi$ with $e$ replaced by $t_i$
  throughout.  Each critical formula belonging to $e$ now is of the
  form $A(t_j') \lif A(t_i)$, since $e$ obviously cannot be a subterm
  of $A(x)$ (if it were, $e$ would be a subterm of $\meps x {A(x)}$,
  i.e., of itself!). Let $\hat\pi_i$ be the sequence of tautologies
  $A(t_i) \lif (A(t_j') \lif A(t_i))$ for $i = 1$, \dots,~$n$,
  followed by $\ST \pi e t_i$.  Each one of the formulas $A(t_j') \lif
  A(t_i)$ follows from one of these by (MP) from $A(t_i)$. Hence,
  $A(t_i) \proves[\hat\pi_i]{\eps} E$. Let $\pi_i = \hat\pi_i[A_i]$ as
  in Lemma~\ref{ded-lemma}.  We have $\proves[\pi_i]{\eps} A_i \lif E$.

  The \eps-term $e$ is not critical in $\pi_i$: Its original critical
  formulas are replaced by $A(t_i) \lif (A(t_j') \lif A(t_i))$, which
  are tautologies.  By (1) of the preceding Lemma, no critical
  \eps-term of rank~$r$ was changed at all. By (2) of the preceding
  Lemma, no critical \eps-term of rank $< r$ was replaced by a
  critical \eps-term of rank~$\ge r$. Hence, $o(\pi_i, r) = o(\pi) -
  1$.

  Let $\pi''$ be the sequence of tautologies $\lnot \bigvee_{i=1}^n
  A(t_i) \lif (A(t_i) \lif A(e))$ followed by $\pi$.  Then
  $\bigvee_{i=1}^n A(t_i) \proves[\pi''] E$, $e$ is not critical in
  $\pi''$, and otherwise $\pi$ and $\pi''$ have the same critical
  formulas. The same goes for $\pi''[\lnot \bigvee A(t_i)]$, a proof
  of $\lnot\bigvee A(t_i) \lif E$.  

  We now obtain $\pi'$ as the $\pi_i$, $i = 1$, \dots,~$n$, followed
  by $\pi[\lnot \bigvee_{i=1}^n A(t_i)]$, followed by the tautology
  \[
  (\lnot \bigvee A(t_i) \lif E) \lif (A(t_1) \lif E) \lif \dots \lif
  (A(t_n) \lif E) \lif E)\dots)
  \]
  from which $E$ follows by $n+1$ applications of (MP).
\end{proof}

\begin{theorem}[First Epsilon Theorem for \ECe]
  If $E$ is a formula not containing any \eps-terms
  and $\proves{\eps} E$, then $\proves{\eps} E$.
\end{theorem}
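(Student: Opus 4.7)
The plan is to read the theorem as saying that if $\proves[\pi]{\eps} E$ with $E$ free of \eps, then $\proves{} E$ in pure $\EC$ (the statement in the display appears to have a typo in the conclusion), and to deduce this by iterating the preceding lemma. I would do a double induction: outer on $r = \rk{\pi}$ and inner on $o(\pi, r)$. The preceding lemma is precisely the inductive step: it produces $\pi'$ with $\rk{\pi'} \le r$ and $o(\pi', r) < o(\pi, r)$, so after finitely many applications we can assume $o(\pi, r) = 0$. But $o(\pi, r) = 0$ means no critical \eps-term of maximal rank remains, which forces $\rk{\pi'} < r$; hence the outer measure drops and the inner induction refreshes.

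The base case is where $\pi$ contains no critical formulas at all. In this case $\pi$ uses only (Taut) and (MP), so it is already a derivation in propositional logic, and the \eps-terms appearing in it are logically inert: I would pick a bijective assignment of fresh variables to the distinct \eps-subterms of $\pi$ (innermost first, respecting $\seq$), and argue that replacing each \eps-term by its associated variable preserves tautologies and preserves instances of (MP). Because $E$ contains no \eps-terms, the end formula is unchanged, so the resulting sequence is a proof $\proves{} E$ in $\EC$.

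To glue the pieces together I would state the formal induction as: for every $\pi$ with $\proves[\pi]{\eps} E$ and $E$ \eps-free, $\proves{} E$, by well-founded induction on $\langle \rk{\pi}, o(\pi, \rk{\pi})\rangle$ in the lexicographic order; the inductive step invokes the preceding lemma and the base case is the substitution-of-variables argument above. I do not expect significant trouble from either the lemma application or the endgame, since the lemma does all the combinatorial work.

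The main obstacle, and the only place where care is needed, is the bookkeeping in the base-case substitution: I have to make sure that when I replace nested \eps-terms by variables the replacement is consistent across the whole proof (two syntactically identical \eps-terms must be replaced by the same variable, and occurrences inside types must match occurrences outside), and that no fresh variable accidentally gets captured. Processing the \eps-terms in order of increasing rank, and choosing the fresh variables to be globally distinct from all variables appearing in $\pi$, avoids these issues and keeps every tautology a tautology and every (MP) step a valid (MP) step.
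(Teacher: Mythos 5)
Your proof follows the paper's argument exactly: a double (lexicographic) induction on $\langle\rk{\pi}, o(\pi,\rk{\pi})\rangle$ whose inductive step is the preceding lemma, and which terminates in a proof containing no critical formulas. Your base-case substitution of fresh variables for the residual, now logically inert, \eps-terms is a detail the paper leaves implicit; it is sound, though replacing \emph{maximal} (outermost) \eps-subterm occurrences is the safer order, since occurrences of an \eps-term subordinate to another are not genuine subterm occurrences and should not be touched.
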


\begin{proof}
  By induction on $o(\pi, r)$, we have: if $\proves[\pi]{\eps} E$,
  then there is a proof $\pi^*$ of $E$ with $\rk{\pi^-} < r$. By
  induction on $\rk(\pi)$ we have a proof $\pi^{**}$ of $E$ with
  $\rk{\pi^{**}} = 0$, i.e., without critical formulas at all.
\end{proof}

\begin{corollary}[Extended First \eps-Theorem]
  If $\proves{\eps} E(e_1, \dots, e_n)$, then $\proves
  \bigvee_{i=1}^m E(t_1^j, \dots, t_n^j)$ for some terms $t_j$ (in \EC).
\end{corollary}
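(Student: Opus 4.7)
The plan is to adapt the elimination procedure from the Lemma preceding the First \eps-Theorem, applying it iteratively while tracking how the end formula evolves: since $E(e_1, \ldots, e_n)$ genuinely contains $\eps$-terms, the end formula will no longer remain fixed as it did in the theorem's proof.

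First I would strengthen that Lemma to drop the hypothesis ``$E$ does not contain \eps''. The original proof used $\ST{E}{e}{t_i} \equiv E$ to collapse the final combining tautology back to $E$; without this collapse, the natural analogue is the tautology
\[
(\lnot\bigvee_i A(t_i) \lif F) \lif (A(t_1) \lif \ST{F}{e}{t_1}) \lif \cdots \lif (A(t_n) \lif \ST{F}{e}{t_n}) \lif (F \lor \bigvee_i \ST{F}{e}{t_i}),
\]
yielding the new end formula $F \lor \bigvee_i \ST{F}{e}{t_i}$, a disjunction of substitution instances of $F$. The rank/order estimates on the produced proof are unchanged, since the constructions of $\pi_i$ and $\pi''$ from the Lemma are unaffected by whether $e$ appears in the end formula.

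I then iterate this strengthened Lemma exactly as in the proof of the First \eps-Theorem, maintaining the invariant that the current end formula has the form $\bigvee_j E(u_1^j, \ldots, u_n^j)$, starting from the single disjunct $E(e_1, \ldots, e_n)$. Each elimination step rewrites one disjunct $E(u_1^j, \ldots, u_n^j)$ into a disjunction of further instances while strictly decreasing the (rank, order) measure, so after finitely many steps I obtain a proof $\pi^*$ with no critical formulas whose end formula is $\bigvee_j E(u_1^j, \ldots, u_n^j)$. The $u_i^j$ are iterated witness-substitutions and may still contain $\eps$-subterms, but those $\eps$-subterms never appeared in any critical formula in $\pi^*$.

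Finally, because $\pi^*$ uses only tautology instances and modus ponens, uniformly replacing each maximal remaining $\eps$-subterm of the end formula throughout $\pi^*$ by a fresh variable preserves provability (tautologies are closed under substitution of terms for terms, and MP commutes with substitution). This delivers $\proves{} \bigvee_{j=1}^m E(t_1^j, \ldots, t_n^j)$ in \EC{} with each $t_i^j$ \eps-free. The main obstacle is the bookkeeping across iterated eliminations --- in particular, ensuring the ``disjunction of $E$-instances'' invariant survives when a witness $t_i$ itself contains $\eps$-subterms of lower rank that are eliminated in later steps, and when the same $\eps$-term occurring in several disjuncts must be substituted consistently so that the (rank, order) measure genuinely decreases at every step.
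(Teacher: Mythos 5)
Your proposal follows essentially the same route as the paper's (sketched) proof: re-run the rank/order-reducing elimination lemma while letting the end formula grow, at each step, into a disjunction of substitution instances $E(e_1', \dots, e_n')$, with the same bookkeeping about consistent replacement of shared \eps-terms that the paper defers to \cite{MoserZach:06}. Your final pass replacing the residual maximal \eps-subterms by fresh variables (sound, since tautologies and MP are preserved under uniform term replacement) is a reasonable way to make the resulting disjunction land in \EC, which the paper leaves implicit.
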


\begin{proof}
If $E$ contains \eps-terms, say, $E$ is $E(e_1, \dots, e_n)$, then
replacement of \eps-terms in the construction of $\pi_i$ may change
$E$---but of course only the \eps-terms appearing as subterms in
it. In each step we obtain not a proof of $E$ but of some disjunction
of instances $E(e_1', \dots, e_n')$. For details, see
\cite{MoserZach:06}.
\end{proof}

\subsection{The Case with Identity}

In the presence of the identity ($=$) predicate in the language,
things get a bit more complicated. The reason is that instances of the
($=_2$) axiom schema,
\[
t = u \lif (A(t) \lif A(u))
\]
may also contain \eps-terms, and the replacement of an \eps-term~$e$
by a term~$t_i$ in the construction of $\pi_i$ may result in a formula
which no longer is an instance of ($=_2$). For instance, suppose that
$t$ is a subterm of $e = e'(t)$ and $A(t)$ is of the form $A'(e'(t))$.
Then the original axiom is
\[
t = u \lif (A'(e'(t)) \lif A'(e'(u))
\] 
which after replacing $e = e'(t)$ by $t_i$ turns into
\[
t = u \lif (A'(t_i) \lif A'(e'(u)).
\] 
So this must be avoided.  In order to do this, we first observe that
just as in the case of the predicate calculus, the instances of
($=_2$) can be derived from restricted instances. In the case of the
predicate calculus, the restricted axioms are 
\begin{align}
  t = u & \lif (P^n(s_1, \dots, t, \dots s_n) \lif P^n(s_1, \dots, u,
  \dots, s_n) \tag{$=_2'$} \\
  t = u & \lif f^n(s_1, \dots, t, \dots, s_n) = f^n(s_1, \dots, u,
  \dots, s_n) \tag{$=_2''$} \\
\intertext{to which we have to add the \emph{\eps-identity axiom schema:}}
  t = u & \lif \meps x {A(x; s_1, \dots, t, \dots s_n)} = 
  \meps x {A(x; s_1, \dots, u, \dots s_n)} \tag{$=_\eps$}
\end{align}
where $\meps x {A(x; x_1, \dots, x_n)}$ is an \eps-type.

\begin{proposition}\label{atomic}
  Every instance of $(=_2)$ can be derived from $(=_2')$, $(=_2'')$,
  and $(=_\eps)$.
\end{proposition}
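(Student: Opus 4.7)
The plan is to prove a strengthened claim by simultaneous induction on the complexity of terms and formulas: for every term $s$, variable $x$, and terms $t$, $u$, the formula $t = u \lif \st{s}{x}{t} = \st{s}{x}{u}$ is derivable from $(=_1)$, $(=_2')$, $(=_2'')$, and $(=_\eps)$; and for every formula $A$, $t = u \lif (\st{A}{x}{t} \liff \st{A}{x}{u})$ is similarly derivable. The full schema $(=_2)$ is then a special case of the formula statement. For terms, variables and constants give trivial tautologies (together with $(=_1)$ when $s$ does not depend on $x$). For $s \seq f^n_i(s_1,\dots,s_n)$, I would apply the induction hypothesis to each $s_j$ and chain $n$ applications of $(=_2'')$, together with the easily derivable transitivity and symmetry of $=$ (obtained from $(=_1)$ and $(=_2')$ applied to the predicate $=$ itself).

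The main obstacle, and the reason $(=_\eps)$ was introduced as a separate restricted schema, is the case $s \seq \meps y{B(y)}$. Here I would first take $p \seq \meps y{B'(y; x_1, \dots, x_n)}$ to be the type of $s$, so that $s \seq \meps y{B'(y; r_1, \dots, r_n)}$ for its immediate subterms $r_1, \dots, r_n$. Every free occurrence of $x$ in $s$ lies inside some $r_j$, so $\st{s}{x}{t} \seq \meps y{B'(y; \st{r_1}{x}{t}, \dots, \st{r_n}{x}{t})}$ and similarly for $u$. The induction hypothesis on each $r_j$ yields $t = u \lif \st{r_j}{x}{t} = \st{r_j}{x}{u}$. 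Then $n$ successive instances of $(=_\eps)$ at the type $p$, each replacing one argument position, chained by transitivity of $=$, give $t = u \lif \st{s}{x}{t} = \st{s}{x}{u}$. The essential point is that $(=_\eps)$ swaps an immediate subterm without reaching under the $\eps$-binder, so the construction goes through regardless of how $x$ is distributed inside the $r_j$.

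For formulas, the atomic case splits in two: an equality $s_1 = s_2$ reduces via the term induction hypothesis for $s_1$ and $s_2$ plus transitivity and symmetry of $=$; a predicate atom $P^n_i(s_1,\dots,s_n)$ is handled exactly like the function-symbol case above, but using $(=_2')$ in place of $(=_2'')$. The propositional connectives are handled by routine propositional reasoning, combining the biconditionals supplied by the induction hypothesis on subformulas. Since the calculus under consideration has no primitive quantifiers (all quantification having been internalised into \eps-terms already treated in the term induction), the induction closes and every instance of $(=_2)$ is obtained.
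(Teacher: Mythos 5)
Your proof is correct and follows exactly the route the paper intends (its proof is simply ``By induction''): a simultaneous induction on terms and formulas, with $(=_\eps)$ used at the type of an $\eps$-term to swap its immediate subterms one position at a time, so that the induction never has to descend under the $\eps$-binder. The only point worth making explicit is that $(=_1)$ is also needed (for the case of terms not containing $x$, and for deriving symmetry and transitivity), which you correctly include even though the proposition's statement omits it.
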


\begin{proof}
  By induction.
\end{proof}

Now replacing every occurrence of $e$ in an instance of ($=_2'$) or
($=_2''$)---where $e$ obviously can only occur inside one of the terms
$t$, $u$, $s_1$, \dots, $s_n$---results in a (different) instance of
($=_2'$) or ($=_2''$).  The same is true of ($=_\eps$), \emph{provided
  that} the $e$ is neither $\meps x{A(x; s_1, \dots, t, \dots s_n)}$
nor $\meps x{A(x; s_1, \dots, u, \dots s_n)}$.  This would be
guaranteed if the type of $e$ is not $\meps x {A(x; x_1, \dots,
  x_n)}$, in particular, if the rank of $e$ is higher than the rank of
$\meps x {A(x; x_1, \dots, x_n)}$.  Moreover, the result of replacing
$e$ by $t_i$ in any such instance of $(=_\eps$) results in an instance
of $(=_\eps)$ which belongs to the same \eps-type. Thus, in order for
the proof of the first \eps-theorem to work also when $=$ and axioms
$(=_1)$, $(=_2')$, $(=_2''$), and $(=_\eps)$ are present, it suffices
to show that the instances of $(=_\eps)$ with \eps-terms of
rank~$\rk{\pi}$ can be removed.  Call an \eps-term $e$ \emph{special}
in~$\pi$, if $\pi$ contains an occurrence of $t = u \lif e' = e$ as an
instance of $(=_\eps)$.

\begin{theorem}
  If $\proves[\pi]{\eps=} E$, then there is a proof $\pi^=$ so that
  $\proves[\pi^=]{\eps=} E$, $\rk{\pi^=} = \rk{pi}$, and the rank of the
  special \eps-terms in $\pi^=$ has rank $< \rk{\pi}$.
\end{theorem}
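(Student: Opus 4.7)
My plan is to adapt the rank-reduction lemma that drives the First Epsilon Theorem without identity, applying it to $(=_\eps)$ occurrences rather than critical formulas. I proceed by induction on the number $N$ of special $\eps$-terms in $\pi$ of rank $r = \rk{\pi}$ (counted up to renaming of bound variables). When $N = 0$ we take $\pi^= = \pi$ and we are done.

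For the inductive step, pick a special $\eps$-term $e$ of rank $r$ and enumerate all $(=_\eps)$-axiom occurrences of $\pi$ that involve $e$ as
\[
I_j \;:\; v_j = w_j \lif e_j = e \qquad (j = 1, \dots, k),
\]
where each $e_j$ shares the common type $p$ of $e$ (so $\rk{e_j} = r$) and differs from $e$ only in a single argument slot (possibly after a harmless symmetric swap via $(=_1)$). The goal is to produce a proof $\pi'$ of the same end formula in which each $I_j$ has been replaced by a derivation using only $(=_1), (=_2'), (=_2'')$, critical formulas already present in $\pi$, and $(=_\eps)$ instances whose $\eps$-terms have rank strictly less than $r$. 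This drops the count of rank-$r$ special $\eps$-terms by one; the induction hypothesis then delivers the required $\pi^=$.

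The substitution strategy mirrors the rank-reduction lemma of Section~\ref{epsthm}. For each $j$ I case-split on the hypothesis $v_j = w_j$: on the negative branch the axiom $I_j$ is a propositional consequence of $\lnot(v_j = w_j)$, so it can be dropped; on the positive branch one derives $e_j = e$ by propagating $v_j = w_j$ through the shared type $p$ using $(=_2')$ and $(=_2'')$ on function and predicate subterm contexts, and $(=_\eps)$ applied only to \eps-terms \emph{subordinate} to $p$ (which by definition have rank $< r$). The two branches are then glued with Lemma~\ref{ded-lemma} and a propositional tautology, exactly as in the analogous construction for critical formulas.

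The main obstacle is the positive branch: establishing $e_j = e$ without any fresh rank-$r$ $(=_\eps)$ instance. The essential observation is that $e$ and $e_j$ differ only at an argument slot of the body of $p$, so the equality propagates through the lower-rank subordinate structure of $A$, while the outermost $\eps_x$-binder of $p$ is shared by both terms and contributes only $(=_1)$. Simultaneously, one must verify via Proposition~\ref{proof-subst} that none of the substitutions performed during the transformation lifts a critical \eps-term's rank above $r$, so that $\rk{\pi^=} = r$ is preserved. The full combinatorial bookkeeping is routine but lengthy and is carried out in detail in \cite{MoserZach:06}.
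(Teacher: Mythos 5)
There is a genuine gap, and it sits exactly where you locate the ``main obstacle'': the positive branch of your case split. You claim that from $v_j = w_j$ you can derive $e_j = e$ using only $(=_2')$, $(=_2'')$, and $(=_\eps)$ instances for \eps-terms subordinate to $p$ (hence of rank $< r$), because ``the outermost $\eps_x$-binder of $p$ is shared by both terms and contributes only $(=_1)$.'' This is false. The terms $e_j \seq \meps x{A(x; s_1, \dots, v_j, \dots, s_n)}$ and $e \seq \meps x{A(x; s_1, \dots, w_j, \dots, s_n)}$ are syntactically distinct, so $e_j = e$ is not an instance of $(=_1)$; and since $\eps$ is a variable-binding term operator rather than a function symbol, $(=_2'')$ gives you no congruence for it. The inference from $v_j = w_j$ to $e_j = e$ \emph{is} precisely the $(=_\eps)$ instance $I_j$ at type $p$ and rank $r$ that you are trying to eliminate --- it is an axiom exactly because it is not derivable from the remaining equality axioms. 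So your positive branch reintroduces the very axiom it was meant to discharge, and the rank-$r$ count does not drop.

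The paper's route is structurally different: rather than trying to \emph{derive} the offending axiom, it \emph{substitutes} $e'$ for $e$ throughout the proof ($\ST{\pi}{e}{e'}$), so that the chosen instance degenerates to $t = u \lif e' = e'$, provable from $(=_1)$, with the two cases $t = u$ and $t \neq u$ glued by the deduction theorem. The price is that this substitution damages \emph{other} $(=_\eps)$ instances and critical formulas mentioning $e$, and repairing them can introduce \emph{new} special \eps-terms of the same type and rank (the term $e''''$ in the paper's second case). This is why a plain induction on the number of rank-$r$ special \eps-terms --- your proposed measure --- does not terminate: the count can go up. The paper instead defines a degree-respecting lexicographic well-order $\prec$ on the instances of the type $p$ occurring in the proof, always eliminates the $\prec$-maximal special term, and shows every newly created special term of type $p$ is strictly $\prec$-smaller; termination then follows from well-foundedness of $\prec$, with an outer induction on the number of types of rank $\rk{\pi}$. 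Your proposal is missing both the substitution idea and this ordering machinery, and the appeal to \cite{MoserZach:06} for ``routine bookkeeping'' cannot paper over the fact that the central derivation step, as stated, is impossible.
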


\begin{proof}
  The basic idea is simple: Suppose $t = u \lif e' = e$ is an instance
of $(=_\eps)$, with $e' \seq \meps x {A(x; s_1, \dots, t, \dots s_n)}$
and $e \seq \meps x {A(x; s_1, \dots, u, \dots s_n)}$.  Replace $e$
everywhere in the proof by~$e'$.  Then the instance of $(=_\eps)$
under consideration is removed, since it is now provable from $e' =
e'$.  This potentially interferes with critical formulas belonging to
$e$, but this can also be fixed: we just have to show that by a
judicious choice of~$e$ it can be done in such a way that the other
$(=_\eps)$ axioms are still of the required form.

Let $p = \meps x{A(x; x_1, \dots, x_n)}$ be an \eps-type of rank
$\rk{\pi}$, and let $e_1$, \dots, $e_l$ be all the \eps-terms of
type~$p$ which have a corresponding instance of~$(=_\eps)$ in~$\pi$.
Let $T_i$ be the set of all immediate subterms of $e_1$, \dots, $e_l$,
in the same position as $x_i$, i.e., the smallest set of terms so that
if $e_i \seq \meps x{A(x; t_1, \dots, t_n)}$, then $t_i \in T$. Now
let let $T^*$ be all instances of $p$ with terms from $T_i$
substituted for the $x_i$.  Obviously, $T$ and thus $T^*$ are finite
(up to renaming of bound variables). Pick a strict order $\prec$ on
$T$ which respects degree, i.e., if $\deg{t} < \deg{u}$ then $t \prec
u$.  Extend $\prec$ to $T^*$ by
\[
 \meps x{A(x; t_1, \dots, t_n)} \prec  \meps x{A(x; t'_1, \dots, t'_n)} 
\]
iff 
\begin{enumerate}
\item $\max\{\deg{t_i} : i = 1, \dots, n\} < \max\{\deg{t_i} : i = 1,
  \dots, n\}$ or
\item $\max\{\deg{t_i} : i = 1, \dots, n\} = \max\{\deg{t_i} : i = 1,
  \dots, n\}$  and
\begin{enumerate}
\item $t_i \seq t_i'$ for $i = 1$, \dots,~$k$.
\item $t_{k+1} \prec t_{k+1}'$
\end{enumerate}
\end{enumerate}
\end{proof}
  
\begin{lemma}
  Suppose $\proves[\pi]{\eps=} E$, $e$ a special \eps-term in $\pi$
  with $\rk{e} = \rk{\pi}$, $\deg{e}$ maximal among the special
  \eps-terms of rank~$\rk{\pi}$, and $e$ maximal with respect to
  $\prec$ defined above. Let $t = u \lif e' = e$ be an instance of
  $(=_\eps)$ in $\pi$. Then there is a proof $\pi'$,
  $\proves[\pi']{\eps=} E$ such that
  \begin{enumerate}
    \item $\rk{\pi'} = \rk{\pi}$
    \item $\pi'$ does not contain $t = u \lif e' = e$ as an axiom
    \item Every special \eps-term $e''$ of $\pi'$ with the same type
      as $e$ is so that $e'' \prec e$.
    \end{enumerate}
\end{lemma}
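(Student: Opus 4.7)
The plan is to construct $\pi'$ by the global substitution $\ST{\pi}{e}{e'}$, followed by small patches wherever the substitution breaks an axiom. Since $e$ and $e'$ share the common type $p$ of rank $\rk{\pi}$ and differ only at one immediate-subterm position, this substitution is well-defined and preserves the rank of every $\eps$-term uniformly, so condition~(1) comes essentially for free once the rest of the construction goes through.

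First I would walk through each axiom of $\pi$ and check its fate under $\ST{\cdot}{e}{e'}$. Tautologies, $(=_1)$-, $(=_2')$- and $(=_2'')$-instances contain $e$ only in subterm position and map to instances of the same schemata. Critical formulas for $\eps$-terms other than $e$ are covered by the preceding lemma: case~(1) leaves them unchanged, case~(2) yields another critical formula of no higher rank. For $(=_\eps)$-instances other than the designated one, I would argue from the rank-, degree-, and $\prec$-maximality of $e$ that whenever $e$ appears as one of the outer $\eps$-terms of such an instance $r_1 = r_2 \lif e_1 = e_2$, the partner outer term $e_1$ is another special $\eps$-term of type $p$ and is therefore already strictly $\prec e$; hence after substitution the axiom turns into a formula whose outer $\eps$-terms are strictly $\prec e$ in $T^*$, derivable from an $(=_\eps)$-instance of rank $\le \rk{\pi}$ involving only such smaller outer terms. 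The designated instance $t = u \lif e' = e$ becomes $t = u \lif e' = e'$, derivable from $(=_1)$ by propositional logic, which secures condition~(2).

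The main obstacle is patching the critical formulas belonging to $e$. A critical formula $A_e(r) \lif A_e(e)$, with $A_e(x) \equiv A(x; s_1, \dots, u, \dots, s_n)$, becomes $A_e(r[e/e']) \lif A_e(e')$ under the substitution, and this is no longer an instance of the critical schema (the outer $\eps$-term would have to be $e$ itself, not $e'$). I would derive it by combining a fresh critical formula $A_{e'}(\ST{r}{e}{e'}) \lif A_{e'}(e')$ for~$e'$, where $A_{e'}(x) \equiv A(x; s_1, \dots, t, \dots, s_n)$, with a short derivation of $A_e(e') \liff A_{e'}(e')$ using $(=_\eps)$ and $(=_2')$ applied at the single inner position where $t$ and $u$ differ. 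The step where the whole argument becomes delicate, and which I expect to be the main obstacle, is verifying that every fresh $(=_\eps)$-instance used in these patch derivations has outer $\eps$-terms strictly $\prec e$ in $T^*$: this is exactly what the $\prec$- and degree-maximality hypotheses are designed to secure, since such new outer $\eps$-terms arise from $p$ by substituting terms drawn from~$T$, all of which lie strictly below $u$ in degree or in $\prec$. Once these patches are in place, condition~(3) follows: every surviving special $\eps$-term of type $p$ in $\pi'$ is either a substituted version of one already $\prec e$ in $\pi$, or one of the newly introduced outer $\eps$-terms, each of which is strictly $\prec e$ by the ordering analysis above.
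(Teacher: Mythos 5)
Your overall strategy---replace $e$ by $e'$ throughout $\pi$ and patch the broken axioms and critical formulas---is the same as the paper's, and your classification of which axioms survive and which need repair matches the paper's case analysis. But there is a genuine gap: every one of your patches is only derivable \emph{under the hypothesis $t = u$}, and you never say how that hypothesis is discharged. For the critical formulas of $e$, the formula $A_e(e') \liff A_{e'}(e')$ you propose to derive relates $A(e'; s_1, \dots, u, \dots, s_n)$ to $A(e'; s_1, \dots, t, \dots, s_n)$; these differ by having $u$ versus $t$ at the distinguished inner position, and any derivation via $(=_2')$ or $(=_\eps)$ yields only a conditional with antecedent $t = u$ (the same applies to the replacement $(=_\eps)$-instances of type $p$, which in the paper are explicitly ``proved from $t = u$ together with'' new axiom instances). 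Since $t = u$ is not a theorem, your patched proof establishes $E$ only from the extra premise $t = u$, not outright.

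The missing idea is a case distinction. The paper takes the patched proof $\pi_2$ as a proof of $E$ from $\Gamma \cup \{t = u\}$ and applies the deduction theorem (Lemma~\ref{ded-lemma}) to obtain $t = u \lif E$; separately, under the hypothesis $t \neq u$ the designated axiom $t = u \lif e' = e$ is a tautological consequence, so the \emph{original} proof $\pi$ yields $t \neq u \lif E$ without any new $(=_\eps)$-instances for $e$. Combining the two branches propositionally gives $\pi'$. Without this split your construction does not produce a proof of $E$ at all, so you should add it; the rest of your argument (including the observation that the newly introduced outer $\eps$-terms of type $p$ are $\prec e$ by the degree- and $\prec$-maximality of $e$) then goes through essentially as in the paper.
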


\begin{proof}
  Let $\pi_0 = \ST \pi e {e'}$ and suppose $t' = u' \lif e''' = e''$
  is an $(=_\eps)$ axiom in $\pi$.

  If $\rk{e''} < \rk{e}$, then the replacement of $e$ by $e'$ can only
  change subterms of $e''$ and $e'''$. In this case, the uniform
  replacement results in another instance of $(=_\eps)$ with
  \eps-terms of the same \eps-type, and hence of the same rank $<
  \rk{\pi}$, as the original.

  If $\rk{e''} = \rk{e}$ but has a different type than $e$, then this
  axiom is unchanged in $\pi_0$: Neither $e''$ nor $e'''$ can be $\seq
  e$, because they have different \eps-types, and neither $e''$ nor
  $e'''$ (nor $t'$ or $u'$, which are subterms of $e''$, $e'''$) can
  contain $e$ as a subterm, since then $e$ wouldn't be degree-maximal
  among the special \eps-terms of $\pi$ of rank~$\rk{\pi}$.

  If the type of $e''$, $e'''$ is the same as that of $e$, $e$ cannot
  be a proper subterm of $e''$ or $e'''$, since otherwise $e''$ or
  $e'''$ would again be a special \eps-term of rank~$\rk{\pi}$ but of
  higher degree than~$e$.  So either $e \seq e''$ or $e \seq e'''$,
  without loss of generality suppose $e \seq e''$.  Then the
  $(=_\eps)$ axiom in question has the form
  \[
  t' = u' \lif \underbrace{\meps x {A(x; s_1, \dots t', \dots
      s_n)}}_{e'''} = \underbrace{\meps x {A(x; s_1, \dots u', \dots
      s_n)}}_{e'' \seq e}
  \]
  and with $e$ replaced by $e'$:
  \[
  t' = u' \lif \underbrace{\meps x {A(x; s_1, \dots t', \dots
      s_n)}}_{e'''} = \underbrace{\meps x {A(x; s_1, \dots t, \dots
      s_n)}}_{e'}
  \]
  which is no longer an instance of $(=_\eps)$, but can be proved from
  new instances of~$(=_\eps)$.  We have to distinguish two cases
  according to whether the indicated position of $t$ and $t'$ in $e'$,
  $e'''$ is the same or not.  In the first case, $u \seq u'$, and the
  new formula
  \begin{align}
    t' = u & \lif \underbrace{\meps x {A(x; s_1, \dots t', \dots
        s_n)}}_{e'''} = \underbrace{\meps x {A(x; s_1, \dots t, \dots
        s_n)}}_{e'} \notag\\
    \intertext{can be proved from $t = u$ together with} t' = t & \lif
    \underbrace{\meps x {A(x; s_1, \dots t', \dots s_n)}}_{e'''} =
    \underbrace{\meps x {A(x; s_1, \dots t, \dots
        s_n)}}_{e'} \tag{$=_\eps$}\\
    t = u & \lif (t' = u \lif t' = t) \tag{$=_2'$}
  \end{align}
  Since $e'$ and $e'''$ already occurred in~$\pi$, by assumption $e'$, $e'''
  \prec e$.

  In the second case, the original formulas read, with terms indicated:
  \begin{align*}
    t = u & \lif \underbrace{\meps x {A(x; s_1, \dots t, \dots, u',
        \dots, s_n)}}_{e'} = \underbrace{\meps x {A(x; s_1, \dots u,
        \dots, u', \dots,
        s_n)}}_{e} \\
    t' = u' & \lif \underbrace{\meps x {A(x; s_1, \dots u, \dots, t',
        \dots, s_n)}}_{e'''} = \underbrace{\meps x {A(x; s_1, \dots u,
        \dots, u', \dots,
        s_n)}}_{e'' \seq e} 
    \intertext{and with $e$ replaced by $e'$ the latter becomes:} 
    t' = u' & \lif \underbrace{\meps x {A(x; s_1, \dots u, \dots, t', \dots
        s_n)}}_{e'''} = \underbrace{\meps x {A(x; s_1, \dots t, \dots,
        u', \dots, s_n)}}_{e'} \\
    \intertext{This new formula is provable from $t = u$ together with}
    u = t & \lif \underbrace{\meps x {A(x; s_1, \dots u, \dots, t', \dots
        s_n)}}_{e'''} = \underbrace{\meps x {A(x; s_1, \dots t, \dots,
        t', \dots, s_n)}}_{e''''} \\
    t' = u' & \lif \underbrace{\meps x {A(x; s_1, \dots t, \dots, t', \dots
        s_n)}}_{e''''} = \underbrace{\meps x {A(x; s_1, \dots t, \dots,
        u', \dots, s_n)}}_{e'}
    \end{align*}
    and some instances of $(=_2')$.  Hence, $\pi'$ contains a
    (possibly new) special \eps-term~$e''''$. However, $e''''
    \prec e$.

    In the special case where $e = e''$ and $e' = e'''$, i.e., the
    instance of $(=_\eps)$ we started with, then replacing $e$ by $e'$
    results in $t = u \lif e' = e'$, which is provable from $e' = e'$,
    an instance of $(=_1)$.

    Let $\pi_1$ be $\pi_0$ with the necessary new instances of
    $(=_\eps)$, added. The instances of $(=_\eps)$ in $\pi_1$ satisfy
    the properties required in the statement of the lemma.

    However, the results of replacing $e$ by $e'$ may have impacted
    some of the critical formulas in the original proof.  For a
    critical formula to which $e \seq \meps x {A(x, u)}$ belongs is of the form
    \begin{align}
      A(t', u) & \lif A(\meps x {A(x, u)}, u) \\
      \intertext{which after replacing $e$ by $e'$ becomes}
      A(t'', u) & \lif A(\meps x {A(x, t)}, u) \label{newcrit}
    \end{align}
    which is no longer a critical formula.  This formula, however, can
    be derived from $t = u$ together with 
    \begin{align}
      A(t'', u) & \lif A(\meps x {A(x, t)}, u) \tag{\eps}\\
      t = u & \lif (A(\meps x{A(x, t)}, t) \lif A(\meps x{A(x, t)}, u))
      \tag{$=_2$} \\
      u = t & \lif (A(t'', u) \lif A(t'', t)) \tag{$=_2$}
    \end{align}
    Let $\pi_2$ be $\pi_1$ plus these derivations of (\ref{newcrit})
    with the instances of $(=_2)$ themselves proved from $(=_2')$ and
    $(=_\eps)$.  The rank of the new critical formulas is the same, so
    the rank of $\pi_2$ is the same as that of~$\pi$.  The new
    instances of $(=_\eps)$ required for the derivation of the last
    two formulas only contain \eps-terms of lower rank that that
    of~$e$, as can be verified.

    $\pi_2$ is thus a proof of $E$ from $t = u$ which satisfies the
    conditions of the lemma.  From it, we obtain a proof $\pi_2[t =
    u]$ of $t = u \lif E$ by the deduction theorem.  On the other
    hand, the instance $t = u \lif e' = e$ under consideration can
    also be proved trivially from $t \neq u$. The proof $\pi[t \neq
    u]$ thus is also a proof, this time of $t \neq u \lif E$, which
    satisfies the conditions of the lemma. We obtain $\pi'$ by
    combining the two proofs.
\end{proof}

\begin{theorem}[First Epsilon Theorem for \ECeq]
  If $E$ is a formula not containing any \eps-terms
  and $\proves{\eps=} E$, then $\proves{=} E$ (in \ECq).
\end{theorem}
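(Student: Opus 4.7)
The plan is to argue by double induction, combining the elimination procedure from the $\eps$-only case (the Lemma preceding the First Epsilon Theorem for $\ECe$) with the preceding Theorem on special $\eps$-terms. The outer induction is on $\rk{\pi}$ and the inner on $o(\pi, \rk{\pi})$, exactly as in the identity-free case. The issue to overcome, already flagged in the introduction to this subsection, is that substituting $t_i$ for a critical $\eps$-term $e$ in an instance of $(=_2)$ can destroy that instance; so before running the elimination step we must first bring the proof into a shape where this cannot happen.

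First I would normalize the proof: by Proposition~\ref{atomic}, every application of $(=_2)$ in $\pi$ can be replaced by a short subderivation that uses only $(=_1)$, $(=_2')$, $(=_2'')$, and $(=_\eps)$, so without loss of generality $\pi$ uses only the restricted identity axioms. Instances of $(=_2')$ and $(=_2'')$ are harmless under the substitution $\ST{\cdot}{e}{t_i}$: since $e$ can occur only inside the argument terms $t$, $u$, $s_1,\dots,s_n$ of an atomic predicate or function symbol, the result is again an instance of the same schema. The only genuine obstacle is $(=_\eps)$, and specifically those instances whose principal $\eps$-terms have rank exactly $\rk{\pi}$ --- i.e., the \emph{special} $\eps$-terms of maximal rank in $\pi$.

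This is exactly what the preceding Theorem eliminates: by iterating the preceding Lemma (along the well-order $\prec$ on the $\eps$-types of rank $\rk{\pi}$), we obtain a proof $\pi^=$ of $E$ with $\rk{\pi^=}=\rk{\pi}$ in which every special $\eps$-term has rank strictly less than $\rk{\pi^=}$. In $\pi^=$, no $(=_\eps)$ axiom has a principal $\eps$-term of rank $\rk{\pi^=}$, and so replacing any critical $\eps$-term $e$ of rank $\rk{\pi^=}$ by a term $t_i$ leaves every $(=_\eps)$ instance intact (it may only alter subterms inside the argument positions, yielding another valid instance of $(=_\eps)$ of the same or lower rank). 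The instances of $(=_2')$ and $(=_2'')$ are, as noted, automatically preserved.

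At this point the elimination step from the Lemma preceding the First Epsilon Theorem for $\ECe$ can be applied verbatim to reduce $o(\pi^=, \rk{\pi^=})$ by one without raising $\rk{\pi^=}$; the inner induction drives this to zero, at which point $\rk{}$ drops, and the outer induction terminates with a proof containing no critical formulas at all. Since the residual proof uses only tautologies, (MP), and the restricted identity axioms (which are derivable from $(=_1)$, $(=_2)$), and since $E$ is $\eps$-free, it is a proof in $\ECq$. The delicate point, and the one I would write out most carefully, is verifying in the inner step that the new $(=_\eps)$ axioms introduced when repairing critical formulas disturbed by the substitution $\ST{\pi^=}{e}{t_i}$ have rank strictly below $\rk{\pi^=}$, so that the invariant ``all special $\eps$-terms have rank $<\rk{}$'' is maintained across the reduction.
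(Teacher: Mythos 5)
Your proposal is correct and follows essentially the same route as the paper: first eliminate the special $\eps$-terms (i.e., the principal terms of $(=_\eps)$ instances) of maximal rank by iterating the preceding Lemma along $\prec$, type by type, and then observe that the identity-free rank/order elimination goes through unchanged because the remaining $(=_2')$, $(=_2'')$, and lower-rank $(=_\eps)$ instances are stable under the substitution $\ST{\pi}{e}{t_i}$. The paper's own proof is just a terser statement of the same double induction, so your more explicit write-up (including the check that the repair of disturbed critical formulas only introduces $(=_\eps)$ instances of lower rank) is a faithful expansion rather than a different argument.
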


\begin{proof}
  By repeated application of the Lemma, every instance of $(=_\eps)$
  involving \eps-terms of a given type~$p$ can be eliminated from~$\pi$.
  The Theorem follows by induction on the number of different
  types of special \eps-terms of rank~$\rk{\pi}$ in~$\pi$.
\end{proof}

\section{Proof Theory of the Epsilon Calculus}\label{proofth}

\subsection{Sequent Calculi}

Leisenring \cite{Leisenring:1969} presented a one-sided sequent
calculus for the \eps-calculus. It operates on sets of formulas
(sequents); proofs are trees of sets of formulas each of which is
either an axiom (at a leaf of the tree) or follows from the sets of
formulas above it by an inference rule.  Axioms are $A, \lnot A$. The
rules are given below:
\begin{center}
\begin{tabular}{ccc}
$\infer[\land R]{\Gamma, A \land B}{\Gamma, A & \Gamma,  B}$
&
$\infer[\land L]{\Gamma, \lnot(A \land B)}{\Gamma, \lnot A, \lnot B}$
&
$\infer[\lnot\lnot]{\Gamma, \lnot\lnot A}{\Gamma, A}$
\\
$\infer[\lor R]{\Gamma, A \lor B}{\Gamma, A, B}$
&
$\infer[\lor L]{\Gamma, \lnot(A \lor B)}{\Gamma, \lnot A & \Gamma, \lnot B}$
&
$\infer[\textit{cut}]{\Pi, \Lambda}{\Pi, A & \Lambda, \lnot A}$
\\
$\infer[\exists R]{\Gamma, \exists x\, A(x)}{\Gamma, A(t)}$ 
&
$\infer[\exists L]{\Gamma, \lnot \exists x\, A(x)}{\Gamma, \lnot A(\meps x A(x))}$
&
$\infer[w]{\Gamma, A, B}{\Gamma, A}$
\\
$\infer[\forall R]{\Gamma, \forall x\, A(x)}{\Gamma, A(\meps x \lnot A(x))}$
&
$\infer[\forall L]{\Gamma, \lnot\forall x\, A(x)}{\Gamma, \lnot A(t)}$ 
\end{tabular}
\end{center}
In contrast to classical sequent systems, there are no eigenvariable
conditions!

It is complete, since proofs can easily be translated into derivations
in $\EC_\eps$; in particular it derives critical formulas:
\[
\infer[\textit{cut}]{\lnot A(t), A(\meps x A(x))}{
  \infer[\exists R]{\lnot A(t), \dem{\exists x\, A(x)}}{\lnot A(t), A(t)}
  &
  \infer[\exists L]{\dem{\lnot\exists x\, A(x)}, A(\meps x A(x))}{\lnot A(\meps x A(x)), A(\meps x A(x))}}
\]
This sequent, however, has no cut-free proof.

Maehara \cite{Maehara:55} instead proposed to simply add axioms
corresponding to to critical formulas and leave out quantifier rules.
Hence, its axioms are $\lnot A, A$ and $\lnot A(t), A(\meps x
A(x))$. It is complete, since the additional axioms allow derivation
of critical formulas.  However, it is also not cut-free complete.
Converses of critical formulas are derivable using cut:
\[
\infer[\mathit{cut}]{\lnot A(\meps x \lnot A(x)), A(t)}{
\dem{\lnot\lnot A(t)}, \lnot A(\meps x \lnot A(x)) & \dem{\lnot A(t)}, A(t)
}
\]
But these obviously have no cut-free proof.  Furthermore, addition of
these converses as axioms will not result in a cut-free complete
system, either. Consider the example given by Wessels: Let $e = \meps
x \lnot(A(x) \lor B(x))$.
\[
\infer[\mathit{cut}]{\lnot A(\meps x \lnot(A(x) \lor B(x))), A(t) \lor B(t)} 
      {
	\infer*[\mathit{cut}]{\dem{\lnot (A(e) \lor B(e))}, A(t) \lor B(t)}{
	  \dem{\lnot\lnot(A(t) \lor B(t))}, \lnot (A(e) \lor B(e))
	} 
	&
	\infer[\lor R]{\lnot A(e), \dem{A(e) \lor B(e)}}{\lnot A(e), A(e)}
      }
\]

Wessels \cite{Wessels:77} proposed to add instead the following rule to the
propositional one-sided sequent calculus:
\[
\infer[\eps0]{\Gamma, \Delta(\meps x A(x))}{
\Gamma, \Delta(z), \lnot A(z) & \Gamma, A(t) 
}
\]
Here, $\Delta(z)$ must be not empty, and $z$ may not occur in the
lower sequent.  This system also derives critical formulas, and so is
complete:
\[
\infer[\eps0]{\lnot A(t), A(\meps x A(x))}{
  \infer[w]{\underbrace{\lnot A(t)}_\Gamma, \underbrace{A(a)}_\Delta, \lnot A(z)}{A(z), \lnot A(z)} 
  &
  \underbrace{\lnot A(t)}_\Gamma, A(t)
}
\]
The rule $\eps0$ is sound.\footnote{Suppose the upper sequents are
  valid but the lower sequent is not, i.e., for some $\M, \Psi, s$,
  $\M \not\models \Gamma, \Delta(\meps x A(x))$.  In particular,
  $\M,\Psi, s \not\models \Gamma$.  Hence, $\M,\Psi,s \models A(t)$,
  i.e., $\M,\Psi,s \models A'(t, t_1, \ldots, t_n)$, as the right
  premise is valid. So $\val \M \Psi s t \in \val \M \Psi s
  {A(x)}$. Now let $s(z) = \val M \Psi s {\meps x A'(x, t_1, \ldots,
    t_n)}$. Then $\M, \Psi, s \models A(z)$ and so $\M, \Psi, s
  \not\models \lnot A(z)$. Since the left premise is valid, $\M, \Psi,
  s \models \Delta(z)$. But also $\M, \Psi, s \not\models \Delta(z)$
  since $\M, \Psi, s \not\models \Delta(\meps x A(x))$.}

Wessels offered a cut-elimination proof for her system. However, the
proof relied on a false lemma to which Maehara gave a counterexample.
\begin{center}
\textbf{Wessels' Lemma.} If $\vdash \Gamma, \Delta(\meps x A(x))$ then
$\vdash \Gamma, \Delta(z), \lnot A(z)$.
\end{center}

Let $A(x) = P(x, \meps y Q(\meps u P(u, y)))$, $\Delta(z) = Q(z)$, and
$\Gamma = \lnot Q(\meps x B(x, w))$.  Then
\[
\underbrace{\lnot Q(\meps x P(x, w))}_\Gamma, 
\underbrace{Q(\meps x P(x, \meps y Q(\meps u P(u, y))))}_{\Delta(\meps x A(x))}
\]
is derivable, since it is of the form $\lnot B(w), B(\meps y B(y))$.
However, the corresponding sequent in the consequent of the lemma,
\[
\underbrace{\lnot Q(\meps x P(x, w))}_\Gamma, 
\underbrace{Q(z)}_{\Delta(z)},
\underbrace{\lnot P(z, \meps y Q(\meps u P(u, y)))}_{\lnot A(z)}
\]
is not derivable, because not valid.\footnote{Let $\card{\M} = \{1,
  2\}$, $Q^\M = \{1\}, P^\M = \{\langle 1,2\rangle, \langle
  2,2\rangle\}$, $s(z) = s(w) = 2$.  Since $\langle 1, 2\rangle \in P^\M$, we
  can choose $\Psi$ so that $\val \M \Psi s {\meps x P(x, 2)} = 1$. So
  $\M, \Psi, s \not\models \lnot Q(\meps x P(x, w))$.  Also,
  $\M,\Psi,s \not\models Q(z)$.  As $\val \M \Psi s {\meps u P(u, 2)}
  = 1$ and $1 \in Q^\M$, we can also fix $\Psi$ so that $\val \M \Psi
  s {\meps y Q(\meps u P(u, y))} = 2$. But then $\M, \Psi, s
  \not\models \lnot P(z, \meps y Q(\meps u P(u, y)))$.)}

Mints (in a review of Wessels' paper) proposed the following rule instead:
\[
\infer[\eps1]{\Gamma, \Delta(\meps x A(x))}{
\Gamma, \Delta(\meps x A(x)), \lnot A(\meps x A(x)) & \Gamma, A(t) 
}
\]
It, too, derives all critical formulas:
\[
\infer[\eps1]{\lnot A(t), A(\meps x A(x))}{
  \infer[w]{\underbrace{\lnot A(t)}_\Gamma, \underbrace{A(\meps x A(x))}_\Delta, \lnot A(\meps x A(x))}{A(\meps x A(x)), \lnot A(\meps x A(x))} 
  &
  \underbrace{\lnot A(t)}_\Gamma, A(t)
}
\]
The system was developed in detail by Yasuhara \cite{Yasuhara:82}.
The Mints-Yasuhara system is cut-free complete.  However, it is not
known if the sequent has a cut-elimination theorem that transforms a
proof with cuts successively into one without cuts. Both Gentzen's and
Tait's approach to cut-elimination do not seem to work.  In a
Gentzen-style proof, the main induction is on on cut length, i.e., the
height of the proof tree above an uppermost cut.  In the induction
step, a cut is permuted upward to reduce the cut length. For instance,
we replace the subproof proof ending in a cut
\[
\infer[\textit{cut}]{\Pi, \Lambda, \exists x\, B(x)}{
  \infer*[\pi]{\Pi, \dem{A}}{} &
  \infer[\exists R]{\dem{\lnot A}, \Lambda, \exists x\, B(x)}{
    \infer*[\pi']{\lnot A, \Lambda, B(t)}{}
  }
}
\qquad\text{by}\qquad
\infer[\exists R]{\Pi, \Lambda, \exists x\, B(x)}{
\infer[\textit{cut}]{\Pi, \Lambda, B(t)}{
  \infer*[\pi]{\Pi, \dem{A}}{} &
  \infer*[\pi']{\dem{\lnot A}, \Lambda, B(t)}{}
  }
}
\]
To permute a cut across the $\eps 1$ rule:
\[
\infer[\textit{cut}]{\Pi, \Gamma, \Delta(\meps x B(x))}{
  \infer*[\pi]{\Pi, \dem{A}}{} &
  \infer[\eps 1]{\dem{\lnot A}, \Gamma, \Delta(\meps x B(x))}{
    \infer*[\pi']{\lnot A, \Gamma, \Delta(\meps x B(x)), \lnot B(\meps x B(x))}{}
    &
    \infer*[\pi'']{\Gamma, B(t)}{}
  }
}
\]
one might try to replace the proof tree with
\[
\infer[\eps 1]{\Gamma, \Delta(\meps x B(x))}{
\infer[\textit{cut}]{\Pi, \Gamma, \Delta(\meps x B(x)), \lnot B(\meps x B(x))}{
  \infer*[\pi]{\Pi, \dem{A}}{} &
    \infer*[\pi']{\dem{\lnot A}, \Gamma, \Delta(\meps x B(x)), \lnot B(\meps x B(x))}{}
} &
    \infer*[\pi'']{\Gamma, B(t)}{}
}
\]
However, here the condition on $\eps1$ is violated if $\lnot A$ is in
$\Delta$.

In a Tait-style cut elimination proof, the main induction is on cut
rank, i.e., complexity of the cut formula.  In the induction step, the
complexity of the cut formula is reduced.  For instance, if a subproof
ends in a cut
\[
\infer[\mathit{cut}]{\Pi, \Lambda}{
  \infer*[\pi]{\Pi, \dem{\lnot(A \land B)}}{}
  &
  \infer*[\pi']{\Lambda, \dem{A \land B\strut}}{}
}
\]
we replace it with
\[
\infer[\mathit{cut}]{\Pi, \Lambda}{
  \infer[\mathit{cut}]{\Pi, \Lambda, \dem{\lnot B}}{
    \infer*[\pi_1]{\Pi, \dem{\lnot A}, \lnot B}{}
    &
    \infer*[\pi'_1]{\Lambda, \dem{A}}{}
  }
  &
  \infer*[\pi'_2]{\Lambda, \dem{B}}{}
}
\] 
This approach requires \emph{inversion lemmas}.  A typical case is:
If $\pi' \vdash \Pi, A \land B$ then there is a $\pi_1' \vdash \Pi, A$
of cut rank and length $\le$ that of $\pi'$.  In the proof of the
inversion lemma, one replaces all ancestors of $A \land B$ in $\pi'$
by $A$ and ``fixes'' those rules that are no longer valid.  For
instance, replace
\[
\infer[\land R]{\Gamma, A}{
  \infer*{\Gamma, A}{} & \infer*{\Gamma, B}{}
}
\quad\text{by}
\quad
\infer*{\Gamma, A}{}
\]
But now consider a derivation $\pi'$ which contains the $\eps 1$
rule:\footnote{($A \land B(\meps x C(x))$ is $\Delta(\meps x C(x))$ in
  this case).}
\[
\infer[\eps 1]{\Pi, A \land B(\meps x C(x)}{
  \infer*{\Pi, A \land B(\meps x C(x)), \lnot C(\meps x C(x))}{}
  &
  \infer*{\Pi, C(t)}{}
}
\]
The inversion lemma produces
\[
\infer[\eps 1]{\Pi, A}{
  \infer*{\Pi, A, \lnot C(\meps x C(x))}{}
  &
  \infer*{\Pi, C(t)}{}
}
\]
This, again, no longer satisfies the condition of $\meps 1$.

\begin{prob}
Prove cut-elimination for the Mints-Yasuhara system, or give a
similarly simple sequent calculus for which it can be proved.
\end{prob}

\subsection{Natural deduction}

In Gentzen's classical natural deduction system NK, the quantifier
rules are given by
\begin{center}
  \begin{tabular}{c@{\qquad}c}
$\infer[\forall I]{\forall x\, A(x)}{A(z)}$ &
$\infer[\forall E]{A(t)}{\forall x\, A(x)}$ \\
$\infer[\exists I]{\exists x\, A(x)}{A(t)}$ &
$\infer[\exists E]{B}{\exists x\, A(x) & \infer*{C}{[A(z)]}}$
  \end{tabular}
  \end{center}
where $z$ must not appear in any undischarged assumptions (nor in
$A(x)$ or $B$). Meyer Viol \cite{MeyerViol:95} has proposed a system
in which the $\exists E$ rule is replaced by
\[
\infer[\exists E_\eps]{A(\meps x A(x))}{\exists x\, A(x)}
\]
and the following term rule is added
\[
\infer[I\eps]{A(\meps x A(x))}{A(t)}
\]

\begin{prob}
Does Meyer Viol's system have a normal form theorem?
\end{prob}

Adding $\exists E_\eps$ and $I\eps$ to the intuitionistic system NJ
results in a system that is not conservative over intuitionistic
logic. For instance, \emph{Plato's principle}, the formula
\[
\exists x(\exists y\, A(y) \to A(x))
\]
becomes derivable:
\[
\infer[\exists I]{\exists x(\exists y\, A(y) \to A(x))}{
  \infer[\to I]{\exists y\, A(y) \to A(\meps x A(x))}{
    \infer[\exists E_\eps]{A(\meps x A(x))}{[\exists y\, A(\meps x A(x))]}
    }}
\]
However, the system also does not collapse to classical logic: it is
conservative for propositional formulas.

Intuitionistic natural deduction systems are especially intriguing, as
Abadi, Gonthier and Werner \cite{AbadiGonthierWerner:04} have shown
that a system of quantified propositional intuitionistic logic with a
choice operator $\meps X$ can be given a Curry-Howard correspondence
via a type system which $\meps X A(X)$ is a type such that the type
$A(X)$ is inhabited.  System $\cal E$ is paired with a simply typed
$\lambda$-calculus that, in addition to $\lambda$-abstraction and
application, features \emph{implementation}: \( \langle t\colon A
\text{ with } X = T\rangle \) of type $A(\epsilon_X A/X)$. If $A(X)$
is a type specification of an interface with variable type $X$, then
$A(T)$ for some type $T$ is an implementation of that interface.

\nocite{Mancosu:98}
\bibliographystyle{splncs03}
\bibliography{epsilon}
\end{document}